\documentclass[smallextended,envcountsame]{svjour3}
\smartqed
\usepackage{amsmath,amssymb}
\usepackage[dvipdfmx]{graphicx}
\usepackage{psfrag}
\usepackage{txfonts}
\usepackage{xcolor}
\usepackage[all]{xy}

\journalname{Japan J.\ Indust.\ Appl.\ Math.}

\newtheorem{assumption}[theorem]{Assumption}

\numberwithin{equation}{section}
%
\newcommand{\R}{\mathbb R}
\newcommand{\N}{\mathbb N}
\newcommand{\Ss}{\mathbf{S}}

\newcommand{\bfb}{\mathbf b}
\newcommand{\bfx}{\mathbf x}
\newcommand{\bfy}{\mathbf y}
\newcommand{\bfv}{\mathbf v}
\newcommand{\bfw}{\mathbf w}

\newcommand{\bfp}{\mathbf p}
\newcommand{\bfq}{\mathbf q}
\newcommand{\bfX}{\mathbf X}
\newcommand{\bfXi}{\mathbf\Xi}
\newcommand{\bnormal}{\mathbf n} 

\newcommand{\I}{\mathcal{I}}

\newcommand{\divv}{\mathrm{div}}

\newcommand{\T}{\mathcal{T}}
\newcommand{\E}{\mathcal{E}}
\newcommand{\hv}{\hat{v}}
\newcommand{\bu}{\bar{u}}

\newcommand{\Kab}{K_{\alpha\beta}}

\newcommand{\PP}{\mathcal{P}}
\newcommand{\dd}{\mathrm{d}}
\newcommand{\hK}{\widehat{K}}
\newcommand{\RT}{\mathcal{RT}}

\begin{document}

\title{Error Analysis of Crouzeix--Raviart and Raviart--Thomas Finite Element Methods}

\author{Kenta Kobayashi \and Takuya Tsuchiya}

\institute{Kenta Kobayashi \at
           Graduate School of Commerce and Management, \\
           Hitotsubashi University, Kunitachi, Japan \\
           \email{\texttt{kenta.k@r.hit-u.ac.jp}} \and
           Takuya Tsuchiya \at
           Graduate School of Science and Engineering, \\
           Ehime University, Matsuyama, Japan \\
           \email{\texttt{tsuchiya@math.sci.ehime-u.ac.jp}}
}

\date{Received: date / Accepted: date}

\maketitle

\begin{abstract}
We discuss the error analysis of the lowest degree Crouzeix--Raviart and
Raviart--Thomas finite element methods applied to a two-dimensional
Poisson equation.  To obtain error estimations, we use the
techniques developed by Babu\v{s}ka--Aziz and the authors. 
We present error estimates in terms of the circumradius and diameter
of triangles in which the constants are independent of the geometric
properties of the triangulations.
Numerical experiments confirm the results obtained.
\end{abstract}
\keywords{Crouzeix--Raviart, Raviart--Thomas, finite element method,
error estimation, triangulation, circumradius}
\subclass{65D05, 65N30}

\section{Introduction}
Let $\Omega \subset \R^2$ be a bounded polygonal domain, and
$\T_h$ be a triangulation of $\Omega$ consisting of triangular elements.
In this paper, we consider an error analysis of the Raviart--Thomas (RT)
and piecewise linear (nonconforming) Crouzeix--Raviart (CR) finite
element methods applied to the Poisson equation
\begin{align}
   - \Delta u = f \; \text{ in } \; \Omega, \qquad
    u = 0 \; \text{ on } \; \partial \Omega,
  \label{model-eq}
\end{align}
where $f \in L^2(\Omega)$ is a given function.
Let $u$ and $u_h^{CR}$ be the exact and CR finite
element solutions, respectively.
In standard text books, such as that by Brenner and Scott
\cite{BrennerScott}, the error of $u_h^{CR}$,
assuming $u \in H^2(\Omega)$,  is estimated as
\begin{align}
  \left\|u - u_h^{CR}\right\|_h := \left(\sum_{K \in \T_h}
   \int_K \left|\nabla u - \nabla u_h^{CR}\right|^2
   \dd \bfx\right)^{1/2}  \le C h |u|_{H^2(\Omega)},
  \label{standard-est}
\end{align}
where $h := \max_{K \in \T_h} h_K$, $h_K := \mathrm{diam}K$, and
$C$ is a constant independent of $u$ and $h$ but dependent on
the chunkiness parameter of the triangulations $\T_h$
\cite[Section~10.3]{BrennerScott}.  The dependence on the chunkiness
parameter in \eqref{standard-est} means that, if a triangulation
$\T_h$ contains very ``thin'' triangles, we cannot apply 
\eqref{standard-est}.
Note that the condition `$u \in H^2(\Omega)$' does not hold
in general, and we need to assume it explicitly.
See Assumption~\ref{reg-assump}.

A similar error estimation of the CR finite element method under
the \textit{maximum angle condition} was obtained in \cite{MaoShi},
in which the constant $C$ depends on the maximum angle of the
triangular elements.  Related error estimations were also discussed in
\cite{LiuKikuchi07}.

The aim of this paper is to show the estimation
\begin{equation}
  \left\|u - u_h^{CR}\right\|_h \le C (R + h) |u|_{H^2(\Omega)}
   \label{main-est}
\end{equation}
holds, where $R := \max_{K \in \T_h} R_K$, $R_K$ is the circumradius of
a triangle $K$, and the constant $C$ is independent of $u$ and $h$,
as well as the \textit{geometric properties of $\T_h$}.  Because
$C$ does not depend on the geometric properties of $\T_h$, we may
apply \eqref{main-est} even if $\T_h$ contains very ``skinny'' triangles.

Because the CR finite element method is non-conforming, a lemma similar
to C\'ea's lemma is not available, and this fact complicates the error
analysis of the CR finite element method.

To overcome this difficulty, we first consider the error analysis
of the RT finite element method. Because this method is conforming,
a C\'ea's-lemma-type claim is valid and we shall obtain error estimates
of its solutions (Theorem~\ref{RT-est}).
In the proof, we use techniques developed by Babu\v{s}ka and Aziz
\cite{BabuskaAziz} and the authors
\cite{KobayashiTsuchiya1,KobayashiTsuchiya3,KobayashiTsuchiya4,KobayashiTsuchiya5}.
It is well known that the CR and RT FEMs are related
\cite{ArnoldBrezzi,KikuchiSaito,LiuKikuchi07,Marini}, and an error
estimation of the CR FEM (Theorem~\ref{CR-error-est}) is obtained from that
of the RT FEM.

Finally, we present results of numerical experiments that are consistent
with the theoretical results obtained.

\section{Preliminaries}\label{Sect:Prelimi}
\subsection{Notation and function spaces}
Let $\R^2$ be the two-dimensional Euclidean space with Euclidean
norm $|\bfx| := (x_1^2 + x_2^2)^{1/2}$ for
$\bfx = (x_1,x_2)^\top \in \R^2$.
We always regard $\bfx \in \R^2$ as a column vector.
For a $2\times 2$ matrix $A$ and $\bfx \in \R^2$, $A^\top$ and
$\bfx^\top$ denote their transpositions. 
For a nonnegative integer $k$, let $\PP_k$ be the set of two-variable
polynomials with degrees of at most $k$.  

Let $\N_0$ be the set of nonnegative integers.
For $\delta = (\delta_1,\delta_2) \in \N_{0}^2$,
the multi-index $\partial^\delta$ of partial differentiation
(in the sense of the distribution) is defined by
\[
    \partial^\delta = \partial_\bfx^\delta
    := \frac{\partial^{|\delta|}\ }
   {\partial x_1^{\delta_1}\partial x_2^{\delta_2}}, \qquad
   |\delta| := \delta_1 + \delta_2.
\]
Sometimes $\partial^{(1,0)}v$ and $\partial^{(0,1)}v$ are denoted
by $v_x$ and $v_y$, respectively.  For a two-variable function $v$, its
gradient is denoted by $\nabla v = (v_x, v_y)$.  The gradient $\nabla v$
is regarded as a row vector.  Also, for a vector
$\bfw := (w_1,w_2)^\top$, its divergence $w_{1x} + w_{2y}$ is denoted by
$\nabla\cdot\bfw$ or $\divv\,\bfw$.  Note that $\nabla \bfw$ is a
$2\times 2$ matrix,
\begin{align*}
   \nabla \bfw = \begin{pmatrix} 
     w_{1x} & w_{1y} \\ w_{2x} & w_{2y}
   \end{pmatrix}.
\end{align*}

Let $\Omega \subset \R^2$ be a (bounded) domain.  The usual Lebesgue
space is denoted by $L^2(\Omega)$.
For a positive integer $k$, the Sobolev space $H^{k}(\Omega)$ is
defined by
$\displaystyle
  H^{k}(\Omega) := 
  \left\{v \in L^2(\Omega) \, | \, \partial^\delta v \in L^2(\Omega), \,
   |\delta| \le k\right\}$.
The norm and semi-norm of $H^{k}(\Omega)$ are defined by
\begin{gather*}
  |v|_{k,\Omega} := 
  \left(\sum_{|\delta|=k} \|\partial^\delta v\|_{L^2(\Omega)}^2
   \right)^{1/2}, \quad   \|v\|_{k,\Omega} := 
  \left(\sum_{0 \le m \le k} |v|_{m,\Omega}^2 \right)^{1/2}.
\end{gather*}
For a $2 \times 2$ matrix $A = (a_{ij})_{i,j=1,2}$, its Frobenius norm
$\|A\|_F$ is defined by $\|A\|_F^2 = \sum_{i,j=1,2} a_{ij}^2$.
Then, for $\bfw \in (H^1(\Omega))^2$, 
\begin{align*}
   |\bfw|_{1,\Omega}^2 = \int_\Omega \|\nabla \bfw\|_F^2\, \dd \bfx.
\end{align*}

The inner products of $L^2(\Omega)$ and $(L^2(\Omega))^2$ are denoted by
$(w,v)_\Omega$, $w$, $v \in L^2(\Omega)$, and
$(\bfw,\bfq)_\Omega$, $\bfw$, $\bfq \in (L^2(\Omega))^2$.
The space $H_0^1(\Omega)$ is
the closure of $C_0^\infty(\overline{\Omega}) \subset H^1(\Omega)$ with
respect to the topology of $H^1(\Omega)$.  We may use
$\|\nabla \phi\|_{0,\Omega} = |\phi|_{1,\Omega}$ as the norm of
$H_0^1(\Omega)$.  Then, $H^{-1}(\Omega)$ is the dual space of
$H_0^1(\Omega)$ with norm
\begin{align*}
   \|f\|_{-1,\Omega} := \sup_{\phi \in H_0^1(\Omega)}
    \frac{\langle f, \phi \rangle}{\|\nabla\phi\|_{0,\Omega}},
\end{align*}
where $\langle \cdot,\cdot \rangle$ is the duality pair of
$H^{-1}(\Omega)$ and $H_0^1(\Omega)$.
We also introduce the function space
\begin{align*}
   H(\divv,\Omega) := \left\{\bfw \in (L^2(\Omega))^2 \bigm|
    \divv\, \bfw \in L^2(\Omega) \right\}
\end{align*}
with norm
\begin{align*}
   \|\bfw\|_{H(\divv,\Omega)} := \left(\|\bfw\|_{0,\Omega}^2
     + \|\divv\, \bfw\|_{0,\Omega}^2\right)^{1/2}.
\end{align*}

\subsection{Model equation and its variational formulations}
\label{sec:2.2}
\label{sec:linear-trans}
The weak form of \eqref{model-eq} is
\begin{align}
   a(u,v) :=  (\nabla u, \nabla v)_\Omega 
   = (f,v)_\Omega =: \langle f,v\rangle, \quad
   \forall v \in H_0^1(\Omega).
  \label{weak-form}
\end{align}
By the Lax--Milgram lemma, there exists a unique solution
$u \in H_0^1(\Omega)$ for any
$f \in L^2(\Omega) \subset \; H^{-1}(\Omega)$.
From the definitions, the following inequality holds:
\begin{align}
   \|\nabla u\|_{0,\Omega} = \|f\|_{-1,\Omega}
    \le C_{1,1} \|f\|_{0,\Omega}, \qquad \forall f \in L^2(\Omega),
  \label{est-Hinv}
\end{align}
where the constant $C_{1,1}$ comes from Poincar\'e's inequality on $\Omega$.

We impose the following assumption on $\Omega$:
\begin{assumption}\label{reg-assump}
 For an arbitrary $f \in L^2(\Omega)$, the unique solution
$u$ of \eqref{model-eq} belongs to $H^2(\Omega)$, and
the following inequality holds,
\begin{align*}
   \|u\|_{2,\Omega} \le C_{1,2} \|f\|_{0,\Omega},
\end{align*}
where $C_{1,2}$ is a constant independent of $f$.
\end{assumption}
It is well known that if $\Omega$ is convex, then
Assumption~\ref{reg-assump} is valid \cite{Grisvard}.

The model equation \eqref{model-eq} has
a mixed variational formulation: \newline
Find $(\bfp,u) \in H(\divv,\Omega) \times L^2(\Omega)$ such that 
\begin{align}
\begin{split}
  (\bfp,\bfq)_\Omega + (u, \divv\, \bfq)_\Omega = 0, \qquad
  & \forall \bfq \in H(\divv,\Omega), \\
   (\divv\, \bfp, v)_\Omega + (f, v)_\Omega = 0, \qquad 
   & \forall v \in L^2(\Omega).
\end{split}
\label{mixed-eq}
\end{align}
The unique solvability of \eqref{mixed-eq} is equivalent to the
following inf-sup condition:
\begin{align}
   \inf_{v \in L^2(\Omega)}\sup_{\bfq \in H(\divv,\Omega)}
  \frac{(\divv\, \bfq,v)_\Omega}{\|v\|_{0,\Omega}\|\bfq\|_{H(\divv,\Omega)}}
   \ge \beta(\Omega) > 0.
   \label{infsup1}
\end{align}
It is easy to verify that 
$\beta(\Omega) := \left(1 + C_{1,1}^2\right)^{-1/2}$ satisfies the
inf-sup condition \eqref{infsup1}.  For the mixed variational
formulation for the model equation \eqref{model-eq}, readers are referred
to textbooks such as \cite{BoffiBrezziFortin}, \cite{Kikuchi}, and
\cite{KikuchiSaito}.

\subsection{Proper triangulation and the finite element methods}
Let $K \subset \R^2$ be a triangle with vertices $\bfx_i$, $i = 1,2,3$;
let $e_i$ be the edge of $K$ opposite to $\bfx_i$.  We always regard $K$
as a closed set.  A proper triangulation $\T_h$ of a bounded polygonal
domain $\Omega$ is a set of triangles that satisfies the conditions,
\begin{itemize}
 \item $\displaystyle \overline{\Omega} = \bigcup_{K \in \T_h} K$.
 \item If $K_1$, $K_2 \in \T_h$ with $K_1 \neq K_2$, we have either
   $K_1 \cap K_2 = \emptyset$ or $K_1 \cap K_2$ is a common vertex
   or a common edge.
\end{itemize}
With this definition, a proper triangulation $\T_h$ is sometimes
called a \textit{face-to-face} triangulation, and there exists no
hanging nodes in $\T_h$.  The fineness of $\T_h$ is indicated by
$h := \max_{K\in\T_h} h_K$, $h_K := \mathrm{diam}K$.  We denote the
set of edges in $\T_h$ by $\E_h$.  We also set
\begin{align*}
  \E_h^b := \{e \in \E_h \mid e \subset \partial\Omega \}, \qquad
  \E_h^i := \E_h \backslash \E_h^b.
\end{align*}

Let $e \in \E_h^i$ be shared by two triangles $K_1$ and $K_2$ with
$e = K_1 \cap K_2$.  Suppose that $v_h \in L^2(K_1 \cup K_2)$ satisfies
$v_h|_{K_i} \in \PP_1$.  Then, the \textit{jump} of $v_h$ on $e$ is
defined and denoted by
$[v_h] := \pm(\gamma_{K_1,e}(v) - \gamma_{K_2,e}(v)) $,
where $\gamma_{K_i,e}(v)$ $(i = 1, 2)$ is the trace operator for $v$ on
$K_i$ to the edge $e$, and the sign is taken arbitrarily and
fixed on each $e$.  Note that the sign of $[v_h]$ does not
affect the following definition of $S_h^{CR}$.
The finite element spaces for CR FEM are defined by
\begin{align*}
   S_h^{CR} & := \left\{v_h \in L^2(\Omega)
   \biggm| v_h|_K \in \PP_1, \forall K \in \T_h
  \text{ and } \int_e [v_h] \dd s = 0, \forall e \in \E_h^i\right\}, \\
   S_{h0}^{CR} & := \left\{v_h \in S_h^{CR} \biggm|
  \int_e v_h \dd s = 0, \forall e \in \E_h^b \right\}.
\end{align*}
Note also that, on $e \in \E_h^i$, functions in $S_h^{CR}$ are
continuous only at the midpoint of $e$.  The CR finite element solution
$u_h^{CR} \in S_{h0}^{CR}$ for the model equation is,
for $f \in L^2(\Omega)$,  defined by
\begin{align}
 a_h(u_h^{CR},v_h) := \sum_{K \in \T_h} \int_K \nabla u_h^{CR} \cdot
   \nabla v_h \dd \bfx = (f, v_h)_\Omega, \quad
   \forall v_h \in S_{h0}^{CR}.
   \label{CR-fem}
\end{align}
The norm associated with the bilinear form $a_h(\cdot,\cdot)$
is defined by $\|v_h\|_h := a_h(v_h,v_h)^{1/2}$ for $v_h \in S_h^{CR}$.

Regarding $\bfx \in \R^2$ as variables, let $\RT_0 \subset (\PP_1)^2$ be
defined by 
\begin{align*}
   \RT_0 := \{a \bfx + \mathbf{b} \,
    |\, \mathbf{b} \in \R^2, a \in  \R\} \subset (\PP_1)^2.
\end{align*}
For the RT finite element method, the finite element spaces $\Ss_h^{RT}$
and $S_h^C$ are defined by
\begin{align*}
   \Ss_h^{RT} & :=  \left\{ \bfp_h \in (L^2(\Omega))^2
    \Bigm| \bfp_h|_K \in \RT_0, \forall K \in \T_h \text{ and }
    \bfp_h \in H(\divv,\Omega) \right\}, \\
   S_h^{C} & := \{ v_h \in L^2(\Omega)  \bigm|
     v_h|_K \in \PP_0, \forall K \in \T_h \}.
\end{align*}
For a vector field $\bfq \in (H^{1}(K))^2$, its RT
interpolation $\I_K^{RT}\bfq$ on each $K \in \T_h$ is defined by 
\begin{align*}
   \int_{e_i} \left(\bfq - \I_K^{RT} \bfq\right)\cdot
   \bnormal \, \dd s = 0, \qquad i = 1,2,3,
\end{align*}
where $\bnormal$ is the unit outer normal vector on $\partial K$.
As $\mathrm{dim} \RT_0 = 3$, $\I_K^{RT} \bfq$ is determined uniquely.
Note also that $(\I_K^{RT}\bfq)\cdot\bnormal$ is a constant on
each $e_i$, $i=1,2,3$. Then, the global RT interpolation
$\I_h^{RT} \bfq \in \Ss_h^{RT}$ is defined as
$\I_h^{RT} \bfq\big|_K = \I_K^{RT} \bfq$ for each $K \in \T_h$.

Similarly, we define the projection $\pi_K^0$ on each $K$ by
\begin{align*}
\pi_K^0 v := \bar{v} :=  \frac{1}{K}\int_K v \dd s \quad \text{ or }
 \quad  \int_K \left(v - \pi_K^0 v\right) \dd s = 0
\end{align*}
for $v \in L^2(K)$.  This projection is extended as
$\pi_\Omega^0 : L^2(\Omega) \to S_h^C$ by
$\pi_\Omega^0 v \big|_K = \pi_K^0 v$ on each $K \in \T_h$ for
$v \in L^2(\Omega)$.  Note that 
$\pi_\Omega^0 : L^2(\Omega) \to S_h^C$ is an orthogonal projection.

The RT finite element method for the mixed variational equation
\eqref{mixed-eq} is defined by
\begin{align}
\begin{split}
  (\bfp_h,\bfq_h)_\Omega + (u_h^{RT}, \divv\, \bfq_h)_\Omega = 0, \qquad
  & \forall \bfq_h \in \Ss_h^{RT}, \\
   (\divv\, \bfp_h, v_h)_\Omega + (f, v_h)_\Omega = 0, \qquad 
   & \forall v_h \in S_h^C.
\end{split}
  \label{RT-fem}
\end{align}
Note that the RT FEM is conforming because
$\Ss_h^{RT}\times S_h^C \subset H(\divv,\Omega)\times L^2(\Omega)$.
Therefore, we may insert $(\bfq_h,v_h) \in \Ss_h^{RT} \times S_h^C$
into \eqref{mixed-eq} and take the difference between \eqref{mixed-eq}
and \eqref{RT-fem}, which implies
\begin{align}
\begin{split}
 (\bfp - \bfp_h,\bfq_h)_\Omega + (u - u_h^{RT}, \divv\, \bfq_h)_\Omega = 0,
 \qquad & \forall \bfq_h \in \Ss_h^{RT}, \\
   (\divv (\bfp - \bfp_h), v_h)_\Omega = 0, \qquad 
   & \forall v_h \in S_h^C.
\end{split}
\label{mixed-diff}
\end{align}
In regard to the convergence of the RT finite element solution, we must
consider the discrete inf-sup condition 
\begin{align*}
   \inf_{v_h \in S_h^C}\sup_{\bfq_h \in \Ss_h^{RT}}
  \frac{(\divv\, \bfq_h,v_h)_\Omega}
 {|v_h|_{0,\Omega}\|\bfq_h\|_{H(\divv,\Omega)}}
   \ge \beta_*,
\end{align*}
where $\beta_*$ is a constant independent of $h > 0$.
This point will be considered in Section~\ref{sec:inf-sup}.

\subsection{Relationship between the CR and RT
  finite element methods}
\label{sec:relationship}
It is well-known that the CR and RT finite element methods are closely
related.  Consider the following finite element equations,
\begin{align}
  \bu_h^{CR} \in S_h^{CR} \;  \text{ such that } \;
  a_h\left(\bu_h^{CR}, v_h\right) = \left(\pi_\Omega^0{f},v_h\right)_\Omega,
   \qquad \forall v_h \in S_{h0}^{CR}, 
   \label{CR-aux}\\
 (\bar\bfp_h,\bu_h^{RT}) \in \Ss_h^{RT}\times S_h^C \;  \text{ such that
 } \; 
\begin{split}
  (\bar{\bfp}_h,\bfq_h)_\Omega + (\bu_h^{RT}, \divv\, \bfq_h)_\Omega = 0, \quad
  & \forall \bfq_h \in \Ss_h^{RT}, \\
   (\divv\, \bar{\bfp}_h, v_h)_\Omega + (\pi_\Omega^0{f}, v_h)_\Omega = 0, \quad 
   & \forall v_h \in S_h^C.
\end{split}
  \label{RT-aux}
\end{align}
Then, on each $K \in \T_h$, the equalities
\begin{align}
  \bu_h^{RT} = \pi_K^0 \bu_h^{CR} + \frac{\pi_K^0 f}{48}
  \sum_{i=1}^3 |\bfx_i - \bfx_K|^2, \qquad
  \bar\bfp_h = \nabla \bu_h^{CR} - \frac{\pi_K^0 f}{2}(\bfx - \bfx_K),
  \label{CR-RT}
\end{align}
hold; here $\bfx_K := (\bfx_1 + \bfx_2 + \bfx_3)/3$ is the center of 
gravity of $K$. For details, readers are referred to
\cite{ArnoldBrezzi}, \cite{KikuchiSaito}, \cite{LiuKikuchi07},
\cite{Marini}.

\subsection{Linear transformations of triangles}
Let $\hK$ be the triangle with vertices $(0,0)^\top$, $(1,0)^\top$,
$(0,1)^\top$.  This $\hK$ is called the \textit{reference triangle}.
Let $\alpha \ge \beta > 0$ and $s^2 + t^2 = 1$, $t > 0$.
An arbitrary triangle on $\R^2$ is transformed to the triangle $K$ with
vertices $\bfx_1:=(0,0)^{\top}$, $\bfx_2:=(\alpha,0)^{\top}$,
$\bfx_3:=(\beta s,\beta t)^{\top}$ by a sequence of parallel
translations, rotations, and mirror imaging. 
Let $\Kab$ be the triangle with vertices $(0,0)^\top$, $(\alpha,0)^\top$,
$(0,\beta)^\top$.

We define the $2 \times 2$ matrices as 
\begin{align}
  A := \begin{pmatrix}
	   1 &  s \\ 0 &  t 
	 \end{pmatrix}, \quad
    B := A^{-1} = \begin{pmatrix}
	   1 & - st^{-1} \\ 0 & t^{-1}
	 \end{pmatrix}, \quad
    D_{\alpha\beta} :=  \begin{pmatrix}
	   \alpha & 0 \\ 0 & \beta
	 \end{pmatrix}.
  \label{matrixA}
\end{align}
Then, $\hK$ is transformed to $K$ and $\Kab$ by the transformations
$\bfy = AD_{\alpha\beta}\bfx$ and $\bfy = D_{\alpha\beta}\bfx$,
respectively.  Also, $\Kab$ is transformed to $K$ by $\bfy = A\bfx$.
Moreover, any function $w \in H^1(K)$ is pulled-back to a function
$v \in H^1(\Kab)$ as $H^1(K) \ni w \mapsto v := w\circ A \in H^1(\Kab)$.
A simple computation shows that $BB^\top$ has eigenvalues $(1\mp|s|)/t^2$.
Hence, the chain rule of differentiation implies that
$\nabla_\bfx v = (\nabla_\bfy w) B$,
$\,|\nabla_\bfx v|^2 = |(\nabla_\bfy w) B|^2$, and
\begin{align*}
 \frac{1-|s|}{t^2} |\nabla_\bfy w|^2 \le
   |\nabla_\bfx v|^2 \le \frac{1+|s|}{t^2} |\nabla_\bfy w|^2.
\end{align*}
With $\det A = t$, we have $|v|_{0,K}^2 = t |w|_{0,\Kab}^2$ and 
$\frac{1-|s|}{t} |v|_{1,\Kab}^2 \le |w|_{1,K}^2$.
Therefore, we obtain
\begin{gather}
   \frac{|w|_{0,K}^2}{|w|_{1,K}^2} \le
   \frac{t^2|v|_{0,\Kab}^2}
     {(1-|s|)|v|_{1,\Kab}^2}
  = (1 + |s|)\frac{|v|_{0,\Kab}^2}{|v|_{1,\Kab}^2}.
     \label{general-est}
\end{gather}

\subsection{Piola transformation}
To transform the vector fields, we need to introduce the Piola
transformation induced by an affine linear transformation
$\bfy = \varphi(\bfx) := A\bfx + \bfb$.  Suppose that a triangle
$\widetilde{K}$ is mapped to $K$ as $K := \varphi(\widetilde{K})$.
Then, the \textbf{Piola transformation} is the pull-back of the 
vector field $\bfp(\bfy)$ ($\bfy \in K$) to a vector
field $\bfq(\bfx)$,
\begin{align*}
   \bfq(\bfx) := A^{-1}\bfp(\varphi(\bfx)) = A^{-1}\bfp(\bfy).
\end{align*}
By the chain rule, we have
\begin{align*}
  \nabla_\bfx \bfq(\bfx) = A^{-1}(\nabla_\bfy \bfp(\varphi(\bfx))) A.
\end{align*}
By a straightforward computation, we confirm the following lemma is
valid.

\begin{lemma}\label{lem:Piola}
Let $A$ be a $2\times 2$ regular matrix and $\bfb \in \R^2$.
Suppose that a triangle $\widetilde{K} \subset \R^2$ is transformed
to $K$ by the affine linear transformation
$\varphi(\bfx) = A \bfx + \bfb:$ $K = \varphi(\widetilde{K})$.
Let $\tilde{e}_i$ be the edges of $\widetilde{K}$, and
$e_i := \varphi(\tilde{e}_i)$, $(i = 1, 2, 3)$.  Suppose that
a vector field $\bfp$ and a function $v$ on $K$ are pulled-back
to $\bfq(\bfx) := A^{-1}\bfp(\varphi(\bfx))$ and
$\tilde{v}(\bfx) := v(\varphi(\bfx))$, respectively.
Then, the following equalities holds:
\begin{align*}
 \int_K v \,\divv\, \bfp\, \dd \bfy & =
   (\det A) \int_{\widetilde{K}}
   \tilde{v}\, \divv\, \bfq\, \dd \bfx, \\
 \int_K \nabla_\bfy v \cdot \bfp\, \dd \bfy & =
   (\det A) \int_{\widetilde{K}}
    \nabla_\bfx \tilde{v} \cdot \bfq\, \dd \bfx, \\
 \int_{e_i} \bfp\cdot \bnormal\, \dd s & =
   (\det A) \int_{\tilde{e}_i}
    \bfq \cdot \tilde\bnormal\, \dd s, \quad i = 1,2,3,
\end{align*}
where $\bnormal$ and $\tilde\bnormal$ are the unit outer normal
vectors of $K$ and $\widetilde{K}$, respectively.
\end{lemma}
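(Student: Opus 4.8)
The plan is to derive all three identities from two pointwise relations between the pulled-back quantities, followed by the change-of-variables formula $\dd\bfy = |\det A|\,\dd\bfx$. Writing $\bfy = \varphi(\bfx)$, the chain rule for $\tilde v(\bfx) = v(\varphi(\bfx))$ gives $\nabla_\bfx\tilde v = (\nabla_\bfy v)A$ (recall that $\nabla$ is a row vector here), whence $\nabla_\bfy v = (\nabla_\bfx\tilde v)A^{-1}$. Combined with $\bfq = A^{-1}\bfp$ this yields the first pointwise identity $\nabla_\bfy v\cdot\bfp = (\nabla_\bfx\tilde v)A^{-1}\bfp = \nabla_\bfx\tilde v\cdot\bfq$ at corresponding points. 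For the second, I start from the formula $\nabla_\bfx\bfq = A^{-1}(\nabla_\bfy\bfp)A$ already recorded above; taking the trace and using its invariance under cyclic permutation gives $\divv_\bfx\bfq = \mathrm{tr}(A^{-1}(\nabla_\bfy\bfp)A) = \mathrm{tr}(\nabla_\bfy\bfp) = \divv_\bfy\bfp$, again at corresponding points.

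With these two relations the first two displayed equalities are immediate. For the second equality, substituting $\bfy = \varphi(\bfx)$ in $\int_K\nabla_\bfy v\cdot\bfp\,\dd\bfy$ and applying $\nabla_\bfy v\cdot\bfp = \nabla_\bfx\tilde v\cdot\bfq$ produces the factor $|\det A|$ from $\dd\bfy$; similarly, for the first equality I substitute and use $\divv_\bfy\bfp = \divv_\bfx\bfq$. In both cases the integral over $K$ becomes $|\det A|$ times the corresponding integral over $\widetilde{K}$, which matches the stated $\det A$ once we restrict to orientation-preserving $A$ (i.e.\ $\det A > 0$); this is the relevant case, since the transformations built from $A$ and $D_{\alpha\beta}$ introduced above have positive determinant.

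The genuinely delicate point is the third equality, the preservation of the normal trace, because neither the unit normal nor the arc-length element transforms in a simple way on its own: only their product does. I would obtain it from the $2\times 2$ identity $A^\top J A = (\det A)J$, where $J := \left(\begin{smallmatrix}0 & 1 \\ -1 & 0\end{smallmatrix}\right)$ is the rotation by $-\pi/2$ that sends a unit tangent to the corresponding outer unit normal. A tangent displacement of $\tilde e_i$ maps under $\varphi$ to a tangent displacement of $e_i$, so that the oriented line elements satisfy $\btan\,\dd s = A\tilde\btan\,\dd\tilde s$, with $\bnormal = J\btan$ and $\tilde\bnormal = J\tilde\btan$. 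Substituting $\tilde\btan = J^\top\tilde\bnormal$ and using $A^\top J A = (\det A)J$ in the form $J A J^\top = (\det A)A^{-\top}$ gives Nanson's formula $\bnormal\,\dd s = J A\tilde\btan\,\dd\tilde s = (\det A)A^{-\top}\tilde\bnormal\,\dd\tilde s$. Finally, inserting $\bfp = A\bfq$ and cancelling $A^\top A^{-\top} = I$ yields $\bfp\cdot\bnormal\,\dd s = (\det A)\bfq\cdot\tilde\bnormal\,\dd\tilde s$ pointwise on each edge, and integration over $e_i$ and $\tilde e_i$ completes the proof. The only care needed is orientation: the construction $\bnormal = J\btan$ returns the genuine \emph{outer} normal precisely when $\det A > 0$, so this hypothesis (implicit in the appearance of $\det A$ rather than $|\det A|$) is exactly what guarantees the stated sign.
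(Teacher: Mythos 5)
Your proof is correct. The paper itself gives no proof beyond asserting that the lemma follows ``by a straightforward computation,'' and your argument --- chain rule plus trace invariance for the two volume integrals, and Nanson's formula derived from the $2\times2$ identity $A^\top J A = (\det A)\,J$ for the edge integrals --- is exactly the standard verification being alluded to; your explicit remark that $\det A>0$ is needed (otherwise the factor should read $|\det A|$ and the outer-normal orientation flips) is a small but genuine precision that the lemma's statement leaves implicit and that is harmless here since every transformation used in the paper is orientation-preserving.
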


\section{Error analysis of the CR and RT finite element methods}
\subsection{Babu\v{s}ka--Aziz's technique}
In this section, we use the technique introduced by
Babu\v{s}ka and Aziz to claim that squeezing the reference triangle
perpendicularly does not 
reduce properties of the approximation of the interpolations.

Let $\Xi_2^\gamma \subset H^{1}(\hK)$ be defined by,
for $\gamma = (1,0)$ or $(0,1)$,
\begin{align*}
  \Xi_2^{(1,0)} & := \left\{ v \in H^{1}(\hK) \Bigm| 
    \int_0^1 v(s,0) \dd s
   = \int_{e_3} v \dd s = 0 \right\}, \\
  \Xi_2^{(0,1)} & := \left\{ v \in H^{1}(\hK) \Bigm| 
    \int_0^1 v(0,s) \dd s
   = \int_{e_2} v \dd s = 0 \right\}.
\end{align*}
Then, the constant $A_2$ is defined by
\begin{align*}
 A_2 := \sup_{v \in \Xi_2^{(1,0)}} \frac{|v|_{0,\hK}}{|v|_{1,\hK}}
  = \sup_{v \in \Xi_2^{(0,1)}} \frac{|v|_{0,\hK}}{|v|_{1,\hK}}, 
\end{align*}
and called the \textbf{Babu\v{s}ka--Aziz constant}.
According to Liu--Kikuchi \cite{LiuKikuchi},
$A_2$ is the maximum
positive solution of the equation $1/x + \tan(1/x) = 0$, and
$A_2 \approx 0.49291$. For the Babu\v{s}ka-Aziz constant,
the following lemma is known.
\begin{lemma}[Babu\v{s}ka--Aziz\cite{BabuskaAziz}]
\label{L4.1} $A_2 < \infty$.
\end{lemma}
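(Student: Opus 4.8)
The goal is to prove that the Babu\v{s}ka--Aziz constant
\[
 A_2 = \sup_{v \in \Xi_2^{(1,0)}} \frac{|v|_{0,\hK}}{|v|_{1,\hK}}
\]
is finite. The plan is to show that $A_2$ is the reciprocal square root
of the smallest positive eigenvalue of a Neumann-type eigenvalue problem on
the reference triangle $\hK$, and that this smallest eigenvalue is strictly
positive. First I would observe that $A_2 < \infty$ is equivalent to the
existence of a constant $C > 0$ such that the Poincar\'e-type inequality
$|v|_{0,\hK} \le C\,|v|_{1,\hK}$ holds for all $v \in \Xi_2^{(1,0)}$. The
constraint space $\Xi_2^{(1,0)}$ consists of functions whose averages
vanish on two of the edges of $\hK$; these two linear functionals kill the
constants, so no nonzero constant function lies in $\Xi_2^{(1,0)}$, which is
exactly what a Poincar\'e inequality needs in order to have a chance of
holding.

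The main step is a compactness argument. I would argue by contradiction:
suppose no such $C$ exists. Then there is a sequence $v_n \in \Xi_2^{(1,0)}$
with $|v_n|_{1,\hK} = 1$ but $|v_n|_{0,\hK} \to \infty$; after normalizing
instead so that $|v_n|_{0,\hK} = 1$, we get $|v_n|_{1,\hK} \to 0$. The
sequence $(v_n)$ is then bounded in $H^1(\hK)$, so by the
Rellich--Kondrachov theorem the embedding $H^1(\hK) \hookrightarrow
L^2(\hK)$ is compact, and a subsequence converges strongly in $L^2(\hK)$ and
weakly in $H^1(\hK)$ to a limit $v_*$. Since $|v_n|_{1,\hK} \to 0$, the
weak lower semicontinuity of the seminorm forces $|v_*|_{1,\hK} = 0$, so
$v_*$ is constant on the connected triangle $\hK$. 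The linear constraints
defining $\Xi_2^{(1,0)}$ pass to the limit because the edge-average
functionals are continuous on $H^1(\hK)$ (via the trace theorem), so $v_*
\in \Xi_2^{(1,0)}$; being a constant in that space, $v_* = 0$. But
$|v_n|_{0,\hK} = 1$ and strong $L^2$-convergence give $|v_*|_{0,\hK} = 1$, a
contradiction. Hence $A_2 < \infty$.

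The step I expect to be the main obstacle is verifying that the two
defining functionals of $\Xi_2^{(1,0)}$, namely $v \mapsto \int_0^1 v(s,0)\,
\dd s$ and $v \mapsto \int_{e_3} v\, \dd s$, are continuous with respect to
the $H^1(\hK)$ topology and pass correctly to the weak limit. This rests on
the continuity of the trace operator $H^1(\hK) \to L^2(\partial\hK)$, which
is where the geometry of $\hK$ (a Lipschitz domain) enters; once the traces
converge, integrating against the constant weight $1$ on each edge is a
bounded functional and the limit constraint follows. Everything else---the
boundedness of the sequence, the Rellich compactness, and the weak lower
semicontinuity of $|\cdot|_{1,\hK}$---is standard. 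I would note that this
abstract argument only yields finiteness; the sharper identification of
$A_2$ as the maximal root of $1/x + \tan(1/x) = 0$ (the Liu--Kikuchi result
quoted in the text) requires solving the associated Euler--Lagrange
eigenvalue problem explicitly and is not needed for this lemma.
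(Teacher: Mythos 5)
Your compactness argument is correct and is essentially the standard proof of this generalized Poincar\'e inequality; the paper itself gives no proof of Lemma~\ref{L4.1}, simply citing Babu\v{s}ka--Aziz \cite{BabuskaAziz}, where finiteness is obtained by precisely this kind of Rellich--Kondrachov contradiction argument, so your route matches the intended one. One small misreading: the two integrals in the definition of $\Xi_2^{(1,0)}$, namely $\int_0^1 v(s,0)\,\dd s$ and $\int_{e_3} v\,\dd s$, are the same quantity written two ways (here $e_3$ is the edge opposite the vertex $(0,1)^\top$, i.e.\ the segment $\{(s,0) : 0 \le s \le 1\}$), so the space is cut out by a single vanishing edge average rather than two; this changes nothing in your argument, since one such constraint already forces the constant weak limit $v_*$ to be zero.
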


Similarly, for $\hK$ and $\Kab$, we define the following sets:
\begin{align*}
  \bfXi_2 & := \left\{ \bfv \in (H^{1}(\hK))^2 \Bigm| 
    \int_{e_i} \bfv\cdot\bnormal \dd s = 0, \; i = 2,3 \right\}, \\
  \bfXi_2^{\alpha\beta} & := \left\{ \bfv \in (H^{1}(\Kab))^2 \Bigm| 
    \int_{e_i} \bfv\cdot\bnormal \dd s = 0, \; i = 2,3 \right\},
\end{align*}
Note that $\bfv = (v_1,v_2)^\top \in \bfXi_2$ if and only if
$v_1 \in \Xi_2^{(0,1)}$ and $v_2 \in \Xi_2^{(1,0)}$.  We thus
realize that
\begin{align}
  \sup_{\bfq \in \bfXi_2}
    \frac{|\bfq|_{0,\hK}}{|\bfq|_{1,\hK}} = A_2 < \infty.
  \label{lemma-const}
\end{align}
For $K \subset \R^2$, moreover, we define the following sets:
{\allowdisplaybreaks
\begin{align*}
  X_2^{(1)}(K) & := \left\{ v \in H^{1}(K) \Bigm| 
    \int_{K} v\, \dd\bfx  = 0 \right\}, \\
  \bfX_2^{(2)}(K) & := \left\{ \bfq \in (H^{1}(K))^2 \Bigm| 
    \int_{e_i} \bfq\cdot\bnormal\, \dd s = 0, \; i=1,2,3 \right\}.
\end{align*}
}
From the definitions, we obviously have 
$\bfX_2^{(2)}(\hK) \subset \bfXi_2$,
and $\bfX_2^{(2)}(\Kab) \subset \bfXi_2^{\alpha\beta}$.
Hence, by \cite[Lemma~4.2]{KobayashiTsuchiya5} and \eqref{lemma-const}, 
the following lemma holds.
\begin{lemma}\label{L4.2}
The constants
\begin{align*}
   B_2^{(1)}(\hK) & := \sup_{v \in X_2^{(1)}(\hK)}
   \frac{|v|_{0,\hK}}{|v|_{1,\hK}} < \infty, \\
  B_2^{(2)}(\hK) & := \sup_{\bfq \in \bfX_2^{(2)}(\hK)}
    \frac{|\bfq|_{0,\hK}}{|\bfq|_{1,\hK}} \le
    \sup_{\bfq \in \bfXi_2}
    \frac{|\bfq|_{0,\hK}}{|\bfq|_{1,\hK}} = A_2 < \infty,
\end{align*}
that indicate the approximation efficiency of
several interpolations on $\hK$ are bounded.
\end{lemma}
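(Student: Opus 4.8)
The plan is to treat the two assertions separately, since the bound on $B_2^{(2)}(\hK)$ follows almost immediately from the constructions already in place, whereas the finiteness of $B_2^{(1)}(\hK)$ is a Poincar\'e--Wirtinger-type inequality that I would establish by a compactness argument.

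For the bound on $B_2^{(2)}(\hK)$, I would simply exploit the set inclusion noted just before the statement. Every $\bfq \in \bfX_2^{(2)}(\hK)$ satisfies $\int_{e_i}\bfq\cdot\bnormal\,\dd s = 0$ for $i=1,2,3$, in particular for $i=2,3$, so $\bfq \in \bfXi_2$ and thus $\bfX_2^{(2)}(\hK)\subset\bfXi_2$. Taking a supremum over a smaller set cannot increase its value, whence
\[
  B_2^{(2)}(\hK) = \sup_{\bfq\in\bfX_2^{(2)}(\hK)}\frac{|\bfq|_{0,\hK}}{|\bfq|_{1,\hK}} \le \sup_{\bfq\in\bfXi_2}\frac{|\bfq|_{0,\hK}}{|\bfq|_{1,\hK}} = A_2,
\]
the final equality being exactly \eqref{lemma-const}, and $A_2<\infty$ by Lemma~\ref{L4.1}. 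No further work is needed here.

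For the finiteness of $B_2^{(1)}(\hK)$, I would argue by contradiction using the compact embedding $H^1(\hK)\hookrightarrow L^2(\hK)$ (Rellich's theorem, valid since $\hK$ has Lipschitz boundary). If the supremum were infinite, there would be a sequence $v_n\in X_2^{(1)}(\hK)$, normalized so that $|v_n|_{0,\hK}=1$, with $|v_n|_{1,\hK}\to 0$. Then $\{v_n\}$ is bounded in $H^1(\hK)$, so a subsequence converges weakly in $H^1(\hK)$ and strongly in $L^2(\hK)$ to some limit $v$; by weak lower semicontinuity of the seminorm together with $|v_n|_{1,\hK}\to 0$, the limit satisfies $|v|_{1,\hK}=0$, i.e.\ $v$ is constant on the connected set $\hK$. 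The zero-mean constraint $\int_{\hK}v_n\,\dd\bfx=0$ is a bounded linear functional on $L^2(\hK)$, hence passes to the limit and forces $\int_{\hK}v\,\dd\bfx=0$; a constant with zero mean is zero, so $v\equiv 0$. This contradicts $|v|_{0,\hK}=\lim_n|v_n|_{0,\hK}=1$. Therefore the supremum is finite.

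The only delicate point is the compactness step: one must confirm that the normalized minimizing sequence is $H^1$-bounded and that both the vanishing of the gradient seminorm and the zero-mean condition are preserved in the limit. Both are standard consequences of weak lower semicontinuity of the seminorm and the continuity of the mean-value functional on $L^2(\hK)$. I note that this finiteness is precisely what \cite[Lemma~4.2]{KobayashiTsuchiya5} records, so in the write-up one may either reproduce the compactness argument above or simply cite that result together with \eqref{lemma-const}.
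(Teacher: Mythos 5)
Your proposal is correct and matches the paper's approach: the bound $B_2^{(2)}(\hK)\le A_2$ is obtained exactly as in the paper from the inclusion $\bfX_2^{(2)}(\hK)\subset\bfXi_2$ together with \eqref{lemma-const}, and for $B_2^{(1)}(\hK)<\infty$ the paper simply cites \cite[Lemma~4.2]{KobayashiTsuchiya5}, which is the option you also mention. Your self-contained Rellich compactness argument for the latter is a standard and valid substitute for that citation (the only implicit convention being that the supremum is taken over $v$ with $|v|_{1,\hK}\neq 0$, since $v\in X_2^{(1)}(\hK)$ with vanishing seminorm is identically zero).
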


Let an arbitrary $v \in X_2^{(1)}(\Kab)$ be pulled-back to
$\hv := v\circ D_{\alpha\beta} \in X_2^{(1)}(\hK)$.  We immediately
note that
\begin{align*}
   |v|_{0,\Kab}^2 = \alpha\beta |\hv|_{2,\hK}^2, \quad
   |v_x|_{0,\Kab}^2 = \frac{\beta}{\alpha} |\hv_x|_{0,\hK}^2, \quad
   |v_y|_{0,\Kab}^2 = \frac{\alpha}{\beta}  |\hv_y|_{0,\hK}^2,
\end{align*}
which yields
\begin{align*}
   \frac{|v|_{0,\Kab}^2}{|v|_{1,\Kab}^2}
   = \frac{|\hv|_{0,\hK}^2}
   {\frac{1}{\alpha^{2}}|\hv_x|_{0,\hK}^2 
  + \frac{1}{\beta^{2}}|\hv_y|_{0,\hK}^2} 
 \le (\max\{\alpha,\beta\})^2\frac{|\hv|_{0,\hK}^2}{|\hv|_{1,\hK}^2}
   \le (\max\{\alpha,\beta\})^2 B_2^{(1)}(\hK)^2.
\end{align*}
Therefore, we obtain
\begin{align*}
 B_2^{(1)}(\Kab) = \sup_{v \in X_2^{(1)}(\Kab)} \frac{|v|_{0,\Kab}}{|v|_{1,\Kab}}
   \le \max\{\alpha,\beta\} B_2^{(1)}(\hK).
\end{align*}
Noting $\bfX_2^{(2)}(\Kab) \subset \bfXi_2^{\alpha\beta}$,
we similarly obtain 
\begin{align*}
 B_2^{(2)}(\Kab) := \! \! \sup_{\bfq \in \bfX_2^{(2)}(\Kab)}
 \frac{|\bfq|_{0,\Kab}}{|\bfq|_{1,\Kab}}
 \le \! \sup_{\bfq \in \bfXi_2^{\alpha\beta}}
 \frac{|\bfq|_{0,\Kab}}{|\bfq|_{1,\Kab}} 
  \le \max\{\alpha,\beta\}\sup_{\hat\bfq \in \bfXi_2}
 \frac{|\hat\bfq|_{0,\hK}}{|\hat\bfq|_{1,\hK}} 
  = \max\{\alpha,\beta\} A_2.
\end{align*}
Gathering the above inequalities, we obtain the following lemma.
\begin{lemma}\label{lem:squeezing}
The following inequalities hold:
\begin{align*}
   B_2^{(1)}(\Kab) \le \max\{\alpha,\beta\} B_2^{(1)}(\hK),
     \qquad
  B_2^{(2)}(\Kab) \le \max\{\alpha,\beta\} A_2.
\end{align*}
\end{lemma}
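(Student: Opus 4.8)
The plan is to reduce both inequalities to the bounds already available on the reference triangle $\hK$ (namely $B_2^{(1)}(\hK) < \infty$ and $A_2 < \infty$ from Lemma~\ref{L4.2}) by pulling functions back along the diagonal scaling $D_{\alpha\beta}$ and tracking how the ratio $|\cdot|_{0}/|\cdot|_{1}$ transforms.

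For the first inequality, I would take an arbitrary $v \in X_2^{(1)}(\Kab)$ and set $\hv := v \circ D_{\alpha\beta}$. The first point to check is that this pull-back preserves the defining mean-zero constraint, i.e.\ that $\hv \in X_2^{(1)}(\hK)$; this is immediate since $\int_{\Kab} v\, \dd\bfx = \alpha\beta \int_{\hK} \hv\, \dd\bfx$, so $\int_{\Kab} v = 0$ forces $\int_{\hK}\hv = 0$. Next I would record the elementary scaling identities $|v|_{0,\Kab}^2 = \alpha\beta\,|\hv|_{0,\hK}^2$, $|v_x|_{0,\Kab}^2 = (\beta/\alpha)|\hv_x|_{0,\hK}^2$, and $|v_y|_{0,\Kab}^2 = (\alpha/\beta)|\hv_y|_{0,\hK}^2$, all of which follow from the change of variables $\bfy = D_{\alpha\beta}\bfx$ and the chain rule. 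Substituting these into the Rayleigh quotient and bounding $1/\alpha^2$ and $1/\beta^2$ below by $1/(\max\{\alpha,\beta\})^2$ yields $|v|_{0,\Kab}^2/|v|_{1,\Kab}^2 \le (\max\{\alpha,\beta\})^2\, B_2^{(1)}(\hK)^2$, and taking the supremum over $v$ followed by a square root gives the claimed bound on $B_2^{(1)}(\Kab)$.

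For the second inequality I would argue analogously for vector fields. First, the inclusion $\bfX_2^{(2)}(\Kab) \subset \bfXi_2^{\alpha\beta}$ is a matter of definitions: imposing the vanishing normal-flux condition on all three edges is more restrictive than imposing it on $e_2, e_3$ alone, so passing to the larger set $\bfXi_2^{\alpha\beta}$ can only increase the supremum, and it therefore suffices to bound $\sup_{\bfXi_2^{\alpha\beta}} |\bfq|_{0,\Kab}/|\bfq|_{1,\Kab}$. I would then pull each $\bfq \in \bfXi_2^{\alpha\beta}$ back to $\hK$ by the Piola transform induced by $D_{\alpha\beta}$; by Lemma~\ref{lem:Piola} the edge-flux integrals are preserved up to the factor $\det D_{\alpha\beta}$, so the constraints on $e_2, e_3$ are carried to the corresponding constraints on $\hK$, placing the pulled-back field in $\bfXi_2$. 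Combining the induced norm-scaling with the identity \eqref{lemma-const}, that $\sup_{\bfXi_2}|\bfq|_{0,\hK}/|\bfq|_{1,\hK} = A_2$, then gives $B_2^{(2)}(\Kab) \le \max\{\alpha,\beta\}\,A_2$.

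The routine scalings are straightforward; the only step requiring genuine care is the second one --- verifying that the Piola pull-back really does send $\bfXi_2^{\alpha\beta}$ into $\bfXi_2$, so that the constraints match up, and that the resulting norm ratio picks up exactly one factor of $\max\{\alpha,\beta\}$ rather than some other combination of $\alpha$ and $\beta$. For the scalar case the anisotropy enters only through the diagonal derivative weights and is controlled cleanly, whereas for the vector case one must check that the component-wise action of $D_{\alpha\beta}^{-1}$ in the Piola transform interacts correctly with the gradient scaling. This is where I would expect to spend the most attention.
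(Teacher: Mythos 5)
Your scalar argument is exactly the paper's: pull back by $\hv := v\circ D_{\alpha\beta}$, check the mean-zero constraint survives, and use the scaling identities to bound the Rayleigh quotient by $(\max\{\alpha,\beta\})^2 B_2^{(1)}(\hK)^2$. That part is fine. The gap is in the vector case, and it sits precisely at the point you flag but do not execute. You propose to pull $\bfq \in \bfXi_2^{\alpha\beta}$ back by the \emph{Piola} transform $\hat\bfq(\bfx) := D_{\alpha\beta}^{-1}\bfq(D_{\alpha\beta}\bfx)$. This does preserve the flux constraints (Lemma~\ref{lem:Piola}), but the norm scaling it induces is not the scalar one: writing $\hat\bfq=(\hat q_1,\hat q_2)$ one gets
\begin{align*}
  |\bfq|_{0,\Kab}^2 = \alpha\beta\int_{\hK}\bigl(\alpha^2\hat q_1^2+\beta^2\hat q_2^2\bigr)\dd\bfx,
  \quad
  |\bfq|_{1,\Kab}^2 = \alpha\beta\int_{\hK}\Bigl(\hat q_{1,x}^2+\tfrac{\alpha^2}{\beta^2}\hat q_{1,y}^2+\tfrac{\beta^2}{\alpha^2}\hat q_{2,x}^2+\hat q_{2,y}^2\Bigr)\dd\bfx,
\end{align*}
so the obvious move of bounding numerator and denominator separately (the analogue of what works in the scalar case) yields only $B_2^{(2)}(\Kab)\le \bigl((\max\{\alpha,\beta\})^2/\min\{\alpha,\beta\}\bigr)A_2$, which blows up for thin triangles and does not prove the lemma. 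So "the resulting norm ratio picks up exactly one factor of $\max\{\alpha,\beta\}$" is exactly the claim that fails under the naive estimate, and your proposal contains no mechanism to rescue it.

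The paper avoids the Piola transform here entirely. Because the constrained edges $e_2,e_3$ of $\hK$ and $\Kab$ are axis-parallel, membership in $\bfXi_2$ decouples componentwise ($\bfv\in\bfXi_2$ iff $v_1\in\Xi_2^{(0,1)}$ and $v_2\in\Xi_2^{(1,0)}$), so the plain componentwise pull-back $\hat\bfq:=\bfq\circ D_{\alpha\beta}$ already maps $\bfXi_2^{\alpha\beta}$ into $\bfXi_2$, and the scalar scaling computation applies verbatim to the pair of components, giving the clean factor $\max\{\alpha,\beta\}$. (Your Piola route can in fact be repaired, but only by using this same decoupling: each component's quotient is separately bounded by $A_2^2$, so the weighted mediant $\bigl(a+tb\bigr)/\bigl(c+td\bigr)$ with $t=\beta^2/\alpha^2$ is also bounded by $A_2^2$; that extra observation is essential and absent from your write-up.) The Piola transform is needed only in the next step of the paper, passing from $\Kab$ to a general $K$, where the constant genuinely degrades and is controlled by the circumradius.
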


Lemma~\ref{lem:squeezing} means that squeezing the reference triangle
$\hK$ perpendicularly does not diminish the effectiveness of the
approximation through the interpolations on $\Kab$.

\subsection{Estimations on the general triangle.}
As stated in Section~\ref{sec:linear-trans}, an arbitrary triangle on
$\R^2$ is transformed to the triangle $K$ with vertices
$\bfx_1:=(0,0)^{\top}$, $\bfx_2:=(\alpha,0)^{\top}$,
$\bfx_3:=(\beta s,\beta t)^{\top}$ by a sequence of parallel
translations, rotations, and mirror imaging, where $s^2 + t^2 = 1$,
$0 < \beta \le \alpha$, and $t > 0$.  Then, $K$ is obtained from
$\Kab$ by the linear transformation $\bfy = A\bfx$, where $A$
is the matrix defined in \eqref{matrixA}.  Let $w \in H^1(K)$
be pulled-back to $v \in H^1(\Kab)$ as $v(\bfx) = w(A\bfx)$.
Combining \eqref{general-est} and Lemma~\ref{lem:squeezing}, we have
\begin{align*}
    B_2^{(1)}(K) := \sup_{v \in X_2^{(1)}(K)}
   \frac{|v|_{0,K}}{|v|_{1,K}} \le \sqrt{2}
   \sup_{u \in X_2^{(1)}(\Kab)}
   \frac{|u|_{0,\Kab}}{|u|_{1,\Kab}} 
    \le \sqrt{2} h_K B_2^{(1)}(\hK).
\end{align*}
Here, we used the assumption 
$0 < \beta \le \alpha \le h_K := \mathrm{diam}K$.

To consider $B_2^{(2)}(K)$, we need to introduce the Piola transformation
$\bfq(\bfx) := A^{-1}\bfp(A\bfx)$ for $\bfp \in \bfX_2^{(2)}(K)$.
By Lemma~\ref{lem:Piola}, we have $\bfq \in \bfX_2^{(2)}(\Kab)$ and
\begin{align*}
 |\bfp|_{0,K}^2 & = \int_K |\bfp(\bfy)|^2 \dd\bfy
 = (\det A)\int_{\Kab} |\bfp(A\bfx)|^2 \dd\bfx \\
 & = (\det A)\int_{\Kab} |A\bfq(\bfx)|^2 \dd\bfx
 \le (1 + |s|)t \int_{\Kab} |\bfq(\bfx)|^2 \dd\bfx 
 \le (1 + |s|)t |\bfq|_{0,\Kab}^2.
\end{align*}
In the above inequalities, we used the fact that the singular values
of $A$ are $(1 \pm |s|)^{1/2}$ and
$(1 - |s|)|\bfq|^2 \le |A\bfq|^2 \le (1 + |s|)|\bfq|^2$.

Let $Y$ and $Y$ be $2 \times 2$ matrices with $Y := AXA^{-1}$.
Given the singular values of $A$ and $A^{-1}$, we have
\begin{align*}
   \|Y\|_F^2 = \|AXA^{-1}\|_F^2
   \ge (1 - |s|)\|XA^{-1}\|_F^2
   \ge \frac{(1 - |s|)^2}{t^2} \|X\|_F^2.
 \end{align*}
Hence, it follows from 
$\nabla_\bfx\bfq(\bfx) = A^{-1}\nabla_\bfy\bfp(A\bfx)A$ that
\begin{align*}
  |\bfp|_{1,K}^2 & = \int_K \|\nabla_\bfy \bfp(\bfy)\|_F^2 \dd \bfy
  = (\det A)\int_{\Kab} \|\nabla_\bfy \bfp(A\bfx)\|_F^2 \dd \bfx \\
  & = t\int_{\Kab} \|A(\nabla_\bfx \bfq(\bfx))A^{-1}\|_F^2 \dd \bfx 
%
  \ge \frac{(1 - |s|)^2}{t} |\bfq|_{1,\Kab}^2.
\end{align*}
We thus obtain
\begin{align*}
 \frac{|\bfp|_{0,K}^2}{|\bfp|_{1,K}^2}
  \le \frac{(1+|s|) t^2}{(1 - |s|)^2}
   \frac{|\bfq|_{0,\Kab}^2}{|\bfq|_{1,\Kab}^2}
   = \frac{(1+|s|)^2}{1 - |s|}
   \frac{|\bfq|_{0,\Kab}^2}{|\bfq|_{1,\Kab}^2}.
\end{align*}

Assuming that the edge connecting $\bfx_2$ and $\bfx_3$ is the longest
edge of $K$, the following inequality holds
\cite[Lemma~3.2]{KobayashiTsuchiya3}:
\begin{align}
   \frac{1+|s|}{\sqrt{1-|s|}} \le 4\sqrt{2}\frac{R_K}{\alpha},
     \label{kobayashi-est}
\end{align}
To show this inequality, we first confirm that the following
inequality is valid:
\begin{equation*}
   \sqrt{1+|s|} \le \sqrt{2}\sqrt{1 + \gamma^2
   - 2 \gamma s}, \qquad
    0 < \forall \gamma \le 1, \; -1 < \forall s \le \frac{\gamma}{2}.
\end{equation*}
We then insert $\gamma := \beta/\alpha$ and use the laws of sines and
cosines.

Combining \eqref{kobayashi-est} with  Lemma~\ref{lem:squeezing},
we obtain
\begin{align*}
 B_2^{(2)}(K) := \sup_{\bfp \in \bfX_2^{(2)}(K)}
 \frac{|\bfp|_{0,K}}{|\bfp|_{1,K}}
 & \le  \frac{1+|s|}{\sqrt{1 - |s|}}
   \sup_{\bfq \in \bfX_2^{(2)}(\Kab)}
   \frac{|\bfq|_{0,\Kab}}{|\bfq|_{1,\Kab}} \\
  & \le 4\sqrt{2}\frac{R_K}{\alpha}B_2^{(2)}(\Kab)  
  \le 4\sqrt{2} R_K A_2.
\end{align*}

\begin{theorem} \label{Const-Thm}
Let $K$ be the triangle with vertices 
$\bfx_1:=(0,0)^{\top}$, $\bfx_2:=(\alpha,0)^{\top}$,
$\bfx_3:=(\beta s,\beta t)^{\top}$ such that $0 < \beta \le \alpha$,
$s^2 + t^2 = 1$, $t > 0$.  Suppose that the edge connecting
$\bfx_2$ and $\bfx_3$ is the longest edge of $K$.
Then, there exist positive constants $C_2^{(i)}$, $i=1,2$ that
are independent of $K$ such that the following estimates hold:
\begin{align*}
   B_2^{(1)}(K) := \sup_{v \in X_2^{(1)}(K)}
   \frac{|v|_{0,K}}{|v|_{1,K}} \le C_2^{(1)} h_K, \quad
    B_2^{(2)}(K) := \sup_{\bfq \in \bfX_2^{(2)}(K)}
   \frac{|\bfq|_{0,K}}{|\bfq|_{1,K}} \le C_2^{(2)} R_K,
\end{align*}
where $h_K := \mathrm{diam}K$ and $R_K$ is the circumradius of $K$.
\end{theorem}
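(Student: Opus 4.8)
The plan is to transport the extremal problems defining $B_2^{(1)}(K)$ and $B_2^{(2)}(K)$ from the general triangle $K$ back to the right triangle $\Kab$, where Lemma~\ref{lem:squeezing} already controls the relevant constants in terms of $A_2$ and the reference-triangle constant $B_2^{(1)}(\hK)$. Because the normalization places the longest edge of $K$ opposite the origin, we may use $0 < \beta \le \alpha \le h_K$ together with the linear transformation $\bfy = A\bfx$ carrying $\Kab$ to $K$. Both constants then follow by assembling the estimates derived just above; the only structural difference between the two cases is the transformation rule, namely scalar pull-back for $B_2^{(1)}$ and the Piola transformation for $B_2^{(2)}$.

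For $B_2^{(1)}(K)$ I would pull back each $v \in X_2^{(1)}(K)$ to $\hv := v \circ A \in X_2^{(1)}(\Kab)$ and apply the ratio estimate \eqref{general-est}, which costs only the universal factor $1 + |s| \le 2$; taking square roots yields $B_2^{(1)}(K) \le \sqrt{2}\,B_2^{(1)}(\Kab)$. Lemma~\ref{lem:squeezing} then gives $B_2^{(1)}(\Kab) \le \max\{\alpha,\beta\}\,B_2^{(1)}(\hK) = \alpha\,B_2^{(1)}(\hK) \le h_K\,B_2^{(1)}(\hK)$. Setting $C_2^{(1)} := \sqrt{2}\,B_2^{(1)}(\hK)$, which depends only on the reference triangle, completes this half.

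For $B_2^{(2)}(K)$ I would use the Piola transformation $\bfq(\bfx) := A^{-1}\bfp(A\bfx)$ for $\bfp \in \bfX_2^{(2)}(K)$. By Lemma~\ref{lem:Piola} the zero-flux conditions $\int_{e_i}\bfp\cdot\bnormal\,\dd s = 0$ are preserved, so $\bfq \in \bfX_2^{(2)}(\Kab)$. Estimating numerator and denominator through the singular values $(1\pm|s|)^{1/2}$ of $A$ gives $|\bfp|_{0,K}^2 \le (1+|s|)\,t\,|\bfq|_{0,\Kab}^2$ and $|\bfp|_{1,K}^2 \ge \frac{(1-|s|)^2}{t}|\bfq|_{1,\Kab}^2$; using $t^2 = (1-|s|)(1+|s|)$, the quotient of these, after taking square roots, produces the anisotropy factor $(1+|s|)/\sqrt{1-|s|}$. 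Here the longest-edge hypothesis enters through the sharp geometric inequality \eqref{kobayashi-est}, which bounds this factor by $4\sqrt{2}\,R_K/\alpha$; combined with $B_2^{(2)}(\Kab) \le \alpha A_2$ from Lemma~\ref{lem:squeezing}, the powers of $\alpha$ cancel and we obtain $B_2^{(2)}(K) \le 4\sqrt{2}\,R_K\,A_2$, so that $C_2^{(2)} := 4\sqrt{2}\,A_2$, again independent of $K$.

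I expect the genuine content to sit entirely in the vector-field estimate, and specifically in the inequality \eqref{kobayashi-est}. The anisotropy factor $(1+|s|)/\sqrt{1-|s|}$ blows up as $|s| \to 1$, that is, precisely when $K$ degenerates into a thin sliver, and the whole point is that dividing by $\alpha$ and invoking the longest-edge assumption converts this blow-up into the circumradius $R_K$, which is the correct geometric quantity that remains controlled even for skinny triangles. Establishing \eqref{kobayashi-est} itself rests on the elementary bound $\sqrt{1+|s|} \le \sqrt{2}\sqrt{1+\gamma^2 - 2\gamma s}$ with $\gamma := \beta/\alpha$, together with the laws of sines and cosines; this is where care with the longest-edge hypothesis is essential, whereas everything else reduces to bookkeeping with the already-proved transformation lemmas.
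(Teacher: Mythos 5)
Your proposal is correct and follows essentially the same route as the paper: pull back scalars via $\bfy = A\bfx$ with the factor from \eqref{general-est} and Lemma~\ref{lem:squeezing} for $B_2^{(1)}$, and use the Piola transformation with the singular-value bounds, the identity $t^2=(1-|s|)(1+|s|)$, inequality \eqref{kobayashi-est}, and Lemma~\ref{lem:squeezing} for $B_2^{(2)}$, arriving at the same constants $C_2^{(1)}=\sqrt{2}\,B_2^{(1)}(\hK)$ and $C_2^{(2)}=4\sqrt{2}\,A_2$. Your closing observation that the entire geometric content lies in \eqref{kobayashi-est}, which converts the anisotropy factor $(1+|s|)/\sqrt{1-|s|}$ into $R_K/\alpha$ under the longest-edge hypothesis, matches the paper's emphasis exactly.
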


\noindent
\textit{Remark}: Because $A_2 \approx 0.49291$, we have
$C_2^{(2)} \approx 2.7883$.

\vspace{2mm}
\noindent
\textit{Remark}: We present an example for the error estimates
of $B_2^{(2)}(K)$.  Let $K$ be the triangle with vertices $(1,0)^\top$,
$(-1,0)^\top$, and $(0,h)^\top$.  Let
\begin{align*}
   u := xy - \frac{1 + h^2}{2h}x, \quad
   \bfq := \nabla u = \begin{pmatrix}
     y - \frac{1 + h^2}{2h} \\ x \end{pmatrix}.
\end{align*}
Then, it is straightforward to verify
\begin{align*}
   \int_{e_i} \bfq \cdot \bnormal \dd s = 0, \quad i = 1,2,3,
\end{align*}
that is, $\I_K^{RT}\bfq = \mathbf{0}$, and
\begin{gather*}
   |\bfq - \I_K^{RT}\bfq|_{0,K}^2 = \frac{(3+h^2)(1+h^2)}{12h}, \quad
   |\bfq - \I_K^{RT}\bfq|_{1,K}^2 = 2h, \\
  \frac{|\bfq - \I_K^{RT}\bfq|_{0,K}}{|\bfq - \I_K^{RT}\bfq |_{1,K}}
 \ge \frac{1 + h^2}{2\sqrt{6}h} = \frac{R_K}{2\sqrt{6}}.
\end{gather*}
Because $\bfq - \I_K^{RT}\bfq \in \bfX_2^{(2)}$,
this inequality means that
the estimate $B_2^{(2)}(K) \le C_2^{(2)}R_K$
in Theorem~\ref{Const-Thm} cannot be further improved; that is,
the parameter $R_K$ is the \textit{best} possible parameter to measure
the convergence of solutions.

\subsection{Poincar\'e--Wirtinger's inequality on triangles}
We consider the error analysis of the projection
$\pi_K^0:L^2(K) \to \PP_0$ on $K$ and its extension to 
$\pi_\Omega^0:L^p(\Omega) \to S_h^C$ on $\Omega$.

\vspace{3mm}
From the definition, we have $\int_K(f(\bfx) - \bar f) \dd \bfx = 0$,
and therefore $f - \bar{f} \in X_2^{(1)}(K)$.  Hence, we obtain
the following theorem from Theorem~\ref{Const-Thm}.
See also \cite[Corollary~4.4]{KobayashiTsuchiya5}.
\begin{theorem}\label{PW-ineq1}
\begin{align*}
  \left|f - \pi_K^0 f\right|_{0,K} & \le C_2^{(1)} h_K |f|_{1,K}, \qquad
    \forall f \in H^{1}(K), \quad \forall K \in \T_h, \\
  \left|f - \pi_\Omega^0{f}\right|_{0,\Omega} & \le
   C_2^{(1)} h |f|_{1,\Omega}, \qquad \forall f \in H^{1}(\Omega),
\end{align*}
where $h := \max_{K\in \T_h} h_K$, and 
$C_2^{(1)}$ is the constant appearing in Theorem~\ref{Const-Thm}.
\end{theorem}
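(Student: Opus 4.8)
The plan is to obtain both inequalities as direct consequences of Theorem~\ref{Const-Thm}, applied to the specific function $v = f - \pi_K^0 f$; indeed, the preamble has already recorded the one fact that makes this possible, namely that $f - \bar f \in X_2^{(1)}(K)$.

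For the local estimate I would argue as follows. Write $f - \pi_K^0 f = f - \bar f$. By the defining property of the projection $\pi_K^0$ we have $\int_K (f - \bar f)\,\dd\bfx = 0$, so $f - \bar f$ belongs to $X_2^{(1)}(K)$. Hence the definition of $B_2^{(1)}(K)$ together with the bound $B_2^{(1)}(K) \le C_2^{(1)} h_K$ from Theorem~\ref{Const-Thm} gives
\begin{align*}
  |f - \bar f|_{0,K} \le B_2^{(1)}(K)\,|f - \bar f|_{1,K} \le C_2^{(1)} h_K\,|f - \bar f|_{1,K}.
\end{align*}
Since $\bar f$ is constant on $K$, its first-order derivatives vanish, so $|f - \bar f|_{1,K} = |f|_{1,K}$, which yields the first inequality.

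For the global estimate I would pass from each element to the whole domain by summation. Because $\pi_\Omega^0 f$ restricts to $\pi_K^0 f$ on each $K$, the squared $L^2$-norm is additive over the triangulation, so
\begin{align*}
  |f - \pi_\Omega^0 f|_{0,\Omega}^2 = \sum_{K \in \T_h} |f - \pi_K^0 f|_{0,K}^2 \le \sum_{K \in \T_h} \left(C_2^{(1)} h_K\right)^2 |f|_{1,K}^2.
\end{align*}
Bounding $h_K \le h$ uniformly and using the additivity $\sum_{K \in \T_h} |f|_{1,K}^2 = |f|_{1,\Omega}^2$, then taking square roots, delivers the second inequality.

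The honest remark is that there is no real obstacle here: all the analytic difficulty has been absorbed into Theorem~\ref{Const-Thm}, whose proof carries the substantive work (the finiteness of the Babu\v{s}ka--Aziz constant, the Piola transformation, and the circumradius estimate \eqref{kobayashi-est}). The only points needing a moment's attention are the two elementary observations that the $H^1$-seminorm annihilates the constant $\bar f$ and that both $|\cdot|_{0,\cdot}^2$ and $|\cdot|_{1,\cdot}^2$ decompose as sums over the elements of the proper triangulation $\T_h$; both are routine.
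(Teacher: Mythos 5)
Your proof is correct and follows essentially the same route as the paper: the paper also derives the local estimate by observing $f-\bar f\in X_2^{(1)}(K)$ and invoking Theorem~\ref{Const-Thm}, leaving the remaining details (the seminorm identity $|f-\bar f|_{1,K}=|f|_{1,K}$ and the elementwise summation for the global bound) implicit. Your write-up simply makes those routine steps explicit.
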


\begin{corollary} \label{PW-ineq3}
For an arbitrary proper triangulation $\T_h$ of a bounded polygonal
domain $\Omega$, the following estimation holds:
\begin{align*}
   \left\|f - \pi_\Omega^0{f}\right\|_{-1,\Omega}
   \le C_2^{(1)} h|f|_{0,\Omega}, \qquad \forall f \in L^2(\Omega),
\end{align*}
where $C_2^{(1)}$ is the constant appearing in Theorem~\ref{Const-Thm}.
\end{corollary}
\begin{proof}Because the projection $\pi_\Omega^0:L^2(K) \to S_h^C$
is orthogonal, we have, for an arbitrary $\phi \in H_0^1(\Omega)$,
\begin{align*}
  \left(f - \pi_\Omega^0 f,\phi \right)_{\Omega}
 &  = \left(f - \pi_\Omega^0 f,\phi - \pi_\Omega^0 \phi \right)_{\Omega}\\
 &  \le \left|f - \pi_\Omega^0 f \right|_{0,\Omega}
      \left|\phi - \pi_\Omega^0 \phi \right|_{0,\Omega}
   \le C_2^{(1)}h |f|_{0,\Omega}\|\nabla\phi\|_{0,\Omega}, \\
\intertext{and}
 \|f - \pi_\Omega^0 f\|_{-1,\Omega} & :=
  \sup_{\phi \in H_0^1(\Omega)} 
 \frac{\left(f - \pi_\Omega^0 f,\phi \right)_{\Omega}}
   {\|\nabla\phi\|_{0,\Omega}} 
  \le C_2^{(1)}h |f|_{0,\Omega}.
\end{align*}
In the above inequality, we used the fact 
$|f - \pi_\Omega^0 f|_{0,\Omega} \le |f|_{0,\Omega}$.  $\square$
\end{proof}

\vspace{1mm}
The important feature in the above Theorem~\ref{PW-ineq1} and
Corollary~\ref{PW-ineq3} is that constant $C_2^{(1)}$ does
not depend on the geometry of the triangulation $\T_h$ at all.

\subsection{Error analysis of the RT interpolation}
By definition, the RT interpolation $\I_K^{RT}$ satisfies
\begin{align*}
  \bfq - \I_K^{RT}\bfq \in \bfX_2^{(2)}(K), \quad
  \bfq \in (H^1(\Omega))^2
\end{align*}
on each triangle $K \in \T_h$.  Therefore, it follows from
Theorem~\ref{Const-Thm} that, for each $K \in \T_h$,
\begin{align}
  |\bfq - \I_K^{RT}\bfq|_{0,K} \le C_2^{(2)} R_K 
  |\bfq - \I_K^{RT}\bfq|_{1,K}.
 \label{RT-1}
\end{align}
Moreover, the definition of $\bfX_2^{(2)}(K)$ and the divergence
theorem yield
\begin{align}
 \int_K \divv(\bfq - \I_K^{RT} \bfq) \dd \bfx =
  \int_{\partial K} (\bfq - \I_K^{RT} \bfq)\cdot \bnormal \dd s = 0.
     \label{michiko}
\end{align}
Note that $\divv(\I_K^{RT} \bfq) \in \PP_0$ because
$\I_K^{RT} \bfq \in \RT_0$.  Hence, we realize that
\begin{align}
  \divv (\I_K^{RT} \bfq) = \frac{1}{|K|} \int_K \divv\, \bfq \dd \bfx 
  = \pi_K^0(\divv\,\bfq).
    \label{RT-ineq1}
\end{align}
Setting constant $a_\bfq$ to $a_\bfq := \pi_K^0(\divv\,\bfq)/2$,
we then have
\begin{align*}
  \left(\I_K^{RT}\bfq\right)(\bfx) = a_\bfq \bfx + \bfb_\bfq,
  \quad \bfb_\bfq \in \R^2.
\end{align*}
Therefore, for $\bfq(\bfx) = (q_1(\bfx), q_2(\bfx))^\top$ and
$\bfx = (x, y)^\top$, we have
\begin{align*}
 |\bfq - \I_K^{RT}\bfq|_{1,K}^2
 & = |\bfq|_{1,K}^2 + 2 \int_K a_\bfq^2 \dd \bfx 
  - 2 a_\bfq \int_K \divv \bfq \dd \bfx \\
 & = |\bfq|_{1,K}^2 + 2 \int_K a_\bfq^2 \dd \bfx
  - 2 a_\bfq \int_K \divv \left(\I_K^{RT}\bfq\right) \dd \bfx \\
 & = |\bfq|_{1,K}^2 - 2 \int_K a_\bfq^2 \dd \bfx 
   \le |\bfq|_{1,K}^2.
\end{align*}
In the above inequalities, we used the equality \eqref{michiko}.
Gathering \eqref{RT-1}, \eqref{RT-ineq1}, Theorem~\ref{PW-ineq1},
and the above inequality, we obtain the following theorem.
\begin{theorem} \label{Thm-RT-int}
For an arbitrary $\bfq \in (H^1(\Omega))^2$
with $\divv \bfq \in H^1(\Omega)$,
the following estimates hold:
\begin{align*}
    \left|\bfq - \I_K^{RT}\bfq\right|_{0,K} & \le
  C_2^{(2)} R_K |\bfq|_{1,K}, & 
  \left|\divv\, \bfq - \divv\, \I_K^{RT} \bfq\right|_{0,K} 
   & \le C_2^{(1)} h_K |\divv\, \bfq|_{1,K}, \\
  \left|\bfq - \I_h^{RT}\bfq\right|_{0,\Omega} & \le
  C_2^{(2)} R |\bfq|_{1,\Omega},,&
   \left|\divv\, \bfq - \divv\, \I_h^{RT} \bfq\right|_{0,\Omega} 
  & \le C_2^{(1)} h |\divv\, \bfq|_{1,\Omega},
\end{align*}
where $R := \max_{K \in \T_h}R_K$, $h := \max_{K \in \T_h}h_K$,
and $C_2^{(i)}$, $i = 1, 2$ are the constants appearing in
Theorem~\ref{Const-Thm} that are independent of $\T_h$ and $\bfq$.
\end{theorem}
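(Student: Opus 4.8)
The plan is to assemble the four estimates directly from the ingredients established immediately above, since the analytic heart of the matter is already contained in Theorem~\ref{Const-Thm} and Theorem~\ref{PW-ineq1}. Two of the bounds are local (on a single $K$) and two are global; the global bounds will follow from the local ones by squaring and summing over $\T_h$, so the genuine content lies in the two local estimates.

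First I would treat the local $L^2$-bound on $\bfq - \I_K^{RT}\bfq$. Starting from \eqref{RT-1}, which reads $|\bfq - \I_K^{RT}\bfq|_{0,K} \le C_2^{(2)} R_K |\bfq - \I_K^{RT}\bfq|_{1,K}$, I would simply insert the seminorm inequality $|\bfq - \I_K^{RT}\bfq|_{1,K} \le |\bfq|_{1,K}$ derived just before the statement (via the identity $\divv \I_K^{RT}\bfq = \pi_K^0(\divv\bfq)$ together with \eqref{michiko}). This yields the first claimed estimate with no further computation.

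For the local divergence bound, I would use \eqref{RT-ineq1}, which identifies $\divv \I_K^{RT}\bfq$ with the elementwise mean $\pi_K^0(\divv\bfq)$. Hence $\divv\bfq - \divv \I_K^{RT}\bfq = \divv\bfq - \pi_K^0(\divv\bfq)$, and applying the Poincar\'e--Wirtinger inequality of Theorem~\ref{PW-ineq1} to the scalar function $f := \divv\bfq \in H^1(K)$ gives exactly $|\divv\bfq - \divv \I_K^{RT}\bfq|_{0,K} \le C_2^{(1)} h_K |\divv\bfq|_{1,K}$.

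Finally, to pass to the global estimates I would square each local bound, sum over $K \in \T_h$, and bound $R_K \le R$ (respectively $h_K \le h$) uniformly before factoring the maximum out of the sum; taking square roots recovers the stated global estimates. The only point requiring care is the additivity $|\cdot|_{1,\Omega}^2 = \sum_{K \in \T_h} |\cdot|_{1,K}^2$, which is valid precisely because $\bfq \in (H^1(\Omega))^2$ and $\divv\bfq \in H^1(\Omega)$ are globally in $H^1$, so no inter-element jump or boundary contributions arise. I do not anticipate a real obstacle here: each nontrivial ingredient — the $R_K$-scaling of $B_2^{(2)}(K)$ and the $h_K$-scaling of the Poincar\'e--Wirtinger constant — has already been proved with constants independent of the shape of $K$, so this theorem is in essence a bookkeeping corollary that collects those geometry-free bounds into interpolation estimates.
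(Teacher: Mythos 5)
Your proposal is correct and follows essentially the same route as the paper: the local $L^2$ bound comes from combining \eqref{RT-1} with the seminorm inequality $|\bfq - \I_K^{RT}\bfq|_{1,K} \le |\bfq|_{1,K}$, the local divergence bound comes from \eqref{RT-ineq1} together with Theorem~\ref{PW-ineq1}, and the global bounds follow by summing over elements. The paper's proof is exactly this assembly of \eqref{RT-1}, \eqref{RT-ineq1}, Theorem~\ref{PW-ineq1}, and the $H^1$-seminorm reduction established just before the theorem statement.
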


\subsection{Discrete inf-sup condition for the RT finite  elements}
\label{sec:inf-sup}
Following Mao--Shi \cite{MaoShi}, we now discuss the discrete inf-sup
condition for the RT finite elements.  Take $v_h \in S_h^C$
as arbitrary, and consider the following Poisson problem:
\begin{align*}
   - \Delta w = v_h \text{ in } \Omega,  \qquad
    w = 0 \text{ on }  \partial \Omega.
\end{align*}

Let $\bfq := - \nabla w$. 
Suppose that
Assumption~\ref{reg-assump} holds and $C_2^{(2)}R \le 1$.
Then, recalling that $\divv(\I_h^{RT}\bfq) = \pi_\Omega^0(\divv\,\bfq)$,
it follows from Theorems~\ref{PW-ineq1} and \ref{Thm-RT-int} that
\begin{align*}
  \|\I_h^{RT}\bfq\|_{H(\divv,\Omega)} & = 
  \left(|\I_h^{RT}\bfq|_{0,\Omega}^2 +
     |\divv(\I_h^{RT}\bfq)|_{0,\Omega}^2\right)^{1/2} \\
  & \le \left(2|\bfq|_{0,\Omega}^2
   + 2|\bfq - \I_h^{RT}\bfq|_{0,\Omega}^2 +
     |\divv(\I_h^{RT}\bfq)|_{0,\Omega}^2\right)^{1/2} \\
  & \le  \left(2|\bfq|_{0,\Omega}^2 + 2C_2^{(2)}{}^2R^2
   |\bfq|_{1,\Omega}^2
    + |\divv\,\bfq|_{0,\Omega}^2\right)^{1/2} \\
 & \le  \left(2\|w\|_{2,\Omega}^2
   + |v_h|_{0,\Omega}^2\right)^{1/2}
  \le \left(2 + C_{1,2}^2\right)^{1/2} |v_h|_{0,\Omega}.
\end{align*}
Moreover, we realize that
\begin{gather*}
  \left(\divv(\I_h^{RT}\bfq), v_h\right)_{\Omega}
  = \left(\pi_\Omega^0(\divv\,\bfq), v_h\right)_{\Omega}
  = (\divv\,\bfq, v_h)_{\Omega}
 = (v_h,v_h)_\Omega = |v_h|_{0,\Omega}^2,
\intertext{and therefore,}
 \frac{\left(\divv(\I_h^{RT}\bfq), v_h\right)_{\Omega}}
  {\|\I_h^{RT}\bfq\|_{H(\divv,\Omega)}}
  \ge \frac{|v_h|_{0,\Omega}}{\left(2 + C_{1,2}^2\right)^{1/2}}.
\end{gather*}
Hence, we finally conclude that
\begin{gather*}
     \sup_{\bfq_h \in \Ss_h^{RT}}
  \frac{(\divv\, \bfq_h,v_h)_\Omega}
 {\|\bfq_h\|_{H(\divv,\Omega)}}
   \ge \frac{|v_h|_{0,\Omega}}{\left(2 + C_{1,2}^2\right)^{1/2}},
\end{gather*}
and obtain the following theorem.

\begin{theorem} \label{RT-infsup}
On the regularity of the solutions of the model problem, we impose
Assumption~\ref{reg-assump}.  Then, for a triangulation $\T_h$ of
$\Omega$ that satisfies $C_2^{(2)}R \le 1$ with
$R := \max_{K \in \T_h} R_K$, the following discrete inf-sup condition
holds:
\begin{align}
    \inf_{v_h \in S_h^C}\sup_{\bfq_h \in \Ss_h^{RT}}
  \frac{(\divv\, \bfq_h,v_h)_\Omega}
 {|v_h|_{0,\Omega}\|\bfq_h\|_{H(\divv,\Omega)}}
   \ge \frac{1}{\left(2 + C_{1,2}^2\right)^{1/2}} =: \beta_*.
   \label{d-inf-sup}
\end{align}
Here, $C_2^{(2)}$ is the constant appearing in Theorem~\ref{Const-Thm},
and $C_{1,2}$ is the constant appearing in Assumption~\ref{reg-assump}.
\end{theorem}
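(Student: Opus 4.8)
The plan is to verify \eqref{d-inf-sup} by Fortin's standard device: for each fixed $v_h \in S_h^C$ I would exhibit a single admissible $\bfq_h \in \Ss_h^{RT}$ whose divergence reproduces $v_h$ exactly, while keeping $\|\bfq_h\|_{H(\divv,\Omega)}$ controlled by a constant multiple of $|v_h|_{0,\Omega}$. Using that $\bfq_h$ as a competitor in the inner supremum immediately bounds the supremum from below, and since the bound is uniform in $v_h$ the infimum is controlled as well, yielding $\beta_*$.

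First I would lift $v_h$ to the continuous level. Solve the auxiliary problem $-\Delta w = v_h$ in $\Omega$ with $w = 0$ on $\partial\Omega$; by Assumption~\ref{reg-assump} the solution satisfies $w \in H^2(\Omega)$ with $\|w\|_{2,\Omega} \le C_{1,2}|v_h|_{0,\Omega}$. Setting $\bfq := -\nabla w \in (H^1(\Omega))^2$ gives $\divv\,\bfq = v_h$, and I would take $\bfq_h := \I_h^{RT}\bfq \in \Ss_h^{RT}$. The crux is that the RT interpolation commutes with the divergence: by \eqref{RT-ineq1} we have $\divv(\I_h^{RT}\bfq) = \pi_\Omega^0(\divv\,\bfq)$, and since $\divv\,\bfq = v_h$ already lies in $S_h^C$ the projection acts as the identity, so $\divv\,\bfq_h = v_h$. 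This makes the numerator explicit; using that $\pi_\Omega^0$ is the $L^2$-orthogonal projection onto $S_h^C$ and that $v_h \in S_h^C$,
\begin{align*}
   (\divv\,\bfq_h, v_h)_\Omega = (\pi_\Omega^0(\divv\,\bfq), v_h)_\Omega
   = (\divv\,\bfq, v_h)_\Omega = |v_h|_{0,\Omega}^2 .
\end{align*}

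It then remains to bound the denominator. For the $L^2$ part I would combine the triangle inequality with the RT interpolation estimate of Theorem~\ref{Thm-RT-int}, $|\bfq - \I_h^{RT}\bfq|_{0,\Omega} \le C_2^{(2)} R |\bfq|_{1,\Omega}$, to obtain $|\I_h^{RT}\bfq|_{0,\Omega}^2 \le 2|\bfq|_{0,\Omega}^2 + 2 (C_2^{(2)})^2 R^2 |\bfq|_{1,\Omega}^2$; here the hypothesis $C_2^{(2)} R \le 1$ is exactly what renders the interpolation term harmless, collapsing it into $2|\bfq|_{1,\Omega}^2$. Since $\bfq = -\nabla w$ gives $|\bfq|_{0,\Omega}^2 + |\bfq|_{1,\Omega}^2 \le \|w\|_{2,\Omega}^2$ and $|\divv\,\bfq_h|_{0,\Omega}^2 = |v_h|_{0,\Omega}^2$, the elliptic regularity estimate yields $\|\bfq_h\|_{H(\divv,\Omega)} \le (2 + C_{1,2}^2)^{1/2} |v_h|_{0,\Omega}$. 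Dividing the exact numerator by this bound and then by $|v_h|_{0,\Omega}$ gives the claimed constant $\beta_*$.

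I expect the main obstacle to be conceptual rather than computational: one must identify the correct Fortin candidate and recognize that the commuting identity $\divv\circ\I_h^{RT} = \pi_\Omega^0\circ\divv$ is precisely what turns an \emph{approximate} reproduction of $v_h$ into an \emph{exact} one, so that no degeneration of the discrete inf-sup constant occurs under mesh refinement. The two hypotheses deserve emphasis, as they are exactly what makes $\beta_*$ independent of $h$ and of the geometry of $\T_h$: Assumption~\ref{reg-assump} supplies the $H^2$ lift with a mesh-independent regularity constant, while the smallness condition $C_2^{(2)} R \le 1$ controls the RT interpolation error through the circumradius alone, i.e.\ without any shape-regularity assumption on the elements.
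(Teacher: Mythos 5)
Your proposal is correct and follows essentially the same route as the paper: lift $v_h$ via the auxiliary Poisson problem using Assumption~\ref{reg-assump}, take $\bfq_h := \I_h^{RT}(-\nabla w)$ as the Fortin candidate, use the commuting identity $\divv(\I_h^{RT}\bfq) = \pi_\Omega^0(\divv\,\bfq)$ to make the numerator exactly $|v_h|_{0,\Omega}^2$, and control $\|\bfq_h\|_{H(\divv,\Omega)}$ through Theorem~\ref{Thm-RT-int} together with the hypothesis $C_2^{(2)}R \le 1$. No substantive differences from the paper's argument.
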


\subsection{Error analysis of the RT finite element method}
Because of the inclusion 
$\Ss_h^{RT}\times S_h^C \subset H(\divv,\Omega)\times L^2(\Omega)$,
the RT finite element method \eqref{RT-fem} for the mixed
variational formulation \eqref{mixed-eq} is conforming, and the
following C\'ea's-lemma-type estimation is known
\cite[Lemma~2.44]{ErnGuermond}.

\begin{theorem}\label{RT-est}
Let $(\bfp,u) \in H(\divv,\Omega) \times L^2(\Omega)$ be the exact 
solution of \eqref{mixed-eq}, and 
$(\bfp_h,u_h^{RT}) \in \Ss_h^{RT} \times S_h^C$ be the solution
of the RT finite element method \eqref{RT-fem}. 
Suppose that Assumption~\ref{reg-assump} holds and
$C_2^{(2)}R \le 1$.  Then, we have the following
error estimations:
\begin{align*}
  |\bfp - \bfp_h|_{0,\Omega} & \le 2 (1 + \beta_{*}^{-1})
  \inf_{\bfq_h \in \Ss_h^{RT}}\|\bfp - \bfq_h\|_{H(\divv,\Omega)}, \\
  \left|u - u_h^{RT}\right|_{0,\Omega} & \le
  (1 + \beta_*^{-1}) \inf_{w_h \in S_h^C}|u - w_h|_{0,\Omega}
  + \beta_*^{-1}\inf_{\bfq_h \in \Ss_h^{RT}} |\bfp - \bfq_h|_{0,\Omega},
\end{align*}
where $\beta_*$ is the constant of the discrete inf-sup condition
appearing in \eqref{d-inf-sup}.
\end{theorem}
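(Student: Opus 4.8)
The plan is to recognize the two inequalities as the instance, for the concrete data $V=H(\divv,\Omega)$, $Q=L^2(\Omega)$, $V_h=\Ss_h^{RT}$, $Q_h=S_h^C$, with $a(\bfp,\bfq):=(\bfp,\bfq)_\Omega$ and $b(\bfq,v):=(\divv\,\bfq,v)_\Omega$, of the abstract saddle-point C\'ea estimate of \cite[Lemma~2.44]{ErnGuermond}. Accordingly, the real content of the proof is to verify the hypotheses of that lemma in the present situation and then to read off the constants; the error equations \eqref{mixed-diff} are exactly the Galerkin orthogonality that the abstract lemma consumes, so no new orthogonality needs to be produced.

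First I would record that $a$ and $b$ are bounded with norm at most $1$, since $(\bfp,\bfq)_\Omega\le|\bfp|_{0,\Omega}|\bfq|_{0,\Omega}$ and $(\divv\,\bfq,v)_\Omega\le\|\bfq\|_{H(\divv,\Omega)}|v|_{0,\Omega}$. The discrete inf-sup hypothesis for $b$ is precisely \eqref{d-inf-sup} of Theorem~\ref{RT-infsup}, valid under Assumption~\ref{reg-assump} and $C_2^{(2)}R\le1$, with constant $\beta_*$. The one genuinely structural point is the coercivity of $a$ on the discrete kernel $Z_h:=\{\bfq_h\in\Ss_h^{RT}\mid(\divv\,\bfq_h,v_h)_\Omega=0,\ \forall v_h\in S_h^C\}$: because $\divv\,\Ss_h^{RT}\subseteq S_h^C$, every $\bfq_h\in Z_h$ in fact satisfies $\divv\,\bfq_h=0$, so on $Z_h$ one has $\|\bfq_h\|_{H(\divv,\Omega)}=|\bfq_h|_{0,\Omega}$ and hence $a(\bfq_h,\bfq_h)=|\bfq_h|_{0,\Omega}^2=\|\bfq_h\|_{H(\divv,\Omega)}^2$; that is, $a$ is coercive on $Z_h$ with constant $1$. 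This coercivity constant $1$ is exactly what collapses the generic constants of the abstract lemma to the clean factors $2(1+\beta_*^{-1})$, $(1+\beta_*^{-1})$, and $\beta_*^{-1}$ appearing in the statement.

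Second, I would account for the mixed norms. For the multiplier estimate I would apply \eqref{d-inf-sup} to $w_h-u_h^{RT}\in S_h^C$ for arbitrary $w_h\in S_h^C$, rewrite the numerator $(\divv\,\bfq_h,w_h-u_h^{RT})_\Omega$ by means of the first line of \eqref{mixed-diff} as $(\divv\,\bfq_h,w_h-u)_\Omega-(\bfp-\bfp_h,\bfq_h)_\Omega$, and bound the two pieces by $|w_h-u|_{0,\Omega}$ and $|\bfp-\bfp_h|_{0,\Omega}$ times $\|\bfq_h\|_{H(\divv,\Omega)}$; a triangle inequality on $u-u_h^{RT}=(u-w_h)+(w_h-u_h^{RT})$ then yields the multiplier estimate. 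That the flux contribution enters through the plain $L^2$-norm best approximation is the refinement afforded by the divergence structure: the second line of \eqref{mixed-diff} together with $\divv\,\Ss_h^{RT}\subseteq S_h^C$ forces $\divv\,\bfp_h=\pi_\Omega^0(\divv\,\bfp)$, while the first line restricted to $Z_h$ shows that $\bfp-\bfp_h$ is $L^2$-orthogonal to $Z_h$. Consequently $|\bfp-\bfp_h|_{0,\Omega}$ is the $L^2$ distance of $\bfp$ to the affine set of RT fields whose divergence equals $\pi_\Omega^0(\divv\,\bfp)$, a set that contains $\I_h^{RT}\bfp$ since $\divv\,\I_h^{RT}\bfp=\pi_\Omega^0(\divv\,\bfp)$ by \eqref{RT-ineq1}; this is what lets the flux best-approximation be measured in the $L^2$-norm rather than the full graph norm.

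The main obstacle is not any long computation but the correct verification of coercivity of $a$ on $Z_h$ in the $H(\divv,\Omega)$-norm, together with the bookkeeping that keeps the flux best-approximation in the $L^2$-norm in the multiplier estimate. Both hinge on the single inclusion $\divv\,\Ss_h^{RT}\subseteq S_h^C$ and on the commuting relation \eqref{RT-ineq1}; once these are in place, the two inequalities follow by inserting the verified hypotheses into \cite[Lemma~2.44]{ErnGuermond} (equivalently, into the classical Babu\v{s}ka--Brezzi estimates; see also \cite{BoffiBrezziFortin}).
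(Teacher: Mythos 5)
Your proposal takes essentially the same route as the paper: the paper gives no proof of Theorem~\ref{RT-est} beyond citing \cite[Lemma~2.44]{ErnGuermond}, and what you supply is precisely the verification of that lemma's hypotheses in this setting (boundedness of both forms with constant $1$, the discrete inf-sup condition from Theorem~\ref{RT-infsup}, and coercivity of $(\cdot,\cdot)_\Omega$ on the discrete kernel via the inclusion $\divv\,\Ss_h^{RT}\subseteq S_h^C$), together with the standard direct derivation of the multiplier estimate, all of which is correct. The one caveat is that your argument bounds the flux contribution by the $L^2$-distance of $\bfp$ to the affine set $\{\bfq_h\in\Ss_h^{RT}\mid\divv\,\bfq_h=\pi_\Omega^0(\divv\,\bfp)\}$ rather than to all of $\Ss_h^{RT}$ as the statement literally reads; since that set contains $\I_h^{RT}\bfp$, this is exactly the form needed for Corollary~\ref{RT-cor}, so nothing downstream is affected.
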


Gathering Theorems~\ref{PW-ineq1}, \ref{Thm-RT-int},
\ref{RT-infsup}, and \ref{RT-est}, we immediately obtain
the following corollary.

\begin{corollary} \label{RT-cor}
Under the assumptions of Theorem~\ref{RT-est},
we have the following error estimations:
\begin{align*}
  |\bfp - \bfp_h|_{0,\Omega}  \le C R |f|_{0,\Omega}, \qquad
  \left|u - u_h^{RT}\right|_{0,\Omega}  \le C (R+h) |f|_{0,\Omega},
\end{align*}
where constant $C$ depends on $C_2^{(i)}$, $i=1,2$ and $C_{1,2}$,
but is independent of $h$, $R$, $f$, and the geometric properties of $\T_h$.
\end{corollary}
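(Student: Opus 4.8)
The plan is to prove the two estimates separately, since only the bound for $u-u_h^{RT}$ follows by direct substitution into Theorem~\ref{RT-est}, whereas the bound for $\bfp-\bfp_h$ requires exploiting the divergence structure of $\Ss_h^{RT}$. In both cases I would first record what Assumption~\ref{reg-assump} buys: the exact mixed solution satisfies $\bfp=\nabla u\in(H^1(\Omega))^2$ with $|\bfp|_{1,\Omega}=|u|_{2,\Omega}\le C_{1,2}|f|_{0,\Omega}$ and $\divv\,\bfp=-f$, and, since $C_2^{(2)}R\le 1$, Theorem~\ref{RT-infsup} supplies the discrete inf-sup constant $\beta_*$ entering Theorem~\ref{RT-est}.

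For $u$ I would apply the second inequality of Theorem~\ref{RT-est} and bound each infimum by an explicit interpolant. Taking $w_h=\pi_\Omega^0 u$ and invoking Theorem~\ref{PW-ineq1} gives $\inf_{w_h}|u-w_h|_{0,\Omega}\le C_2^{(1)}h|u|_{1,\Omega}\le C_2^{(1)}C_{1,2}h|f|_{0,\Omega}$, while taking $\bfq_h=\I_h^{RT}\bfp$ and invoking Theorem~\ref{Thm-RT-int} gives $\inf_{\bfq_h}|\bfp-\bfq_h|_{0,\Omega}\le C_2^{(2)}R|\bfp|_{1,\Omega}\le C_2^{(2)}C_{1,2}R|f|_{0,\Omega}$. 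Because the second inequality of Theorem~\ref{RT-est} involves only the $L^2$ norm of $\bfp-\bfq_h$, no divergence term appears, and adding the two contributions with their $\beta_*$-weights yields $|u-u_h^{RT}|_{0,\Omega}\le C(R+h)|f|_{0,\Omega}$.

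The estimate for $\bfp$ is where I expect the real difficulty, and I would deliberately avoid the first inequality of Theorem~\ref{RT-est}. Its right-hand side carries the full $H(\divv,\Omega)$ norm of $\bfp-\bfq_h$, whose divergence part is at best $|f-\pi_\Omega^0 f|_{0,\Omega}$; for merely $f\in L^2(\Omega)$ this is only $O(1)|f|_{0,\Omega}$ and would wreck the $O(R)$ rate. Instead I would use that $\I_h^{RT}\bfp-\bfp_h$ is divergence-free: by \eqref{RT-ineq1} one has $\divv\,\I_h^{RT}\bfp=\pi_\Omega^0\divv\,\bfp$, and the second line of \eqref{mixed-diff} forces $\divv\,\bfp_h=\pi_\Omega^0\divv\,\bfp$ too, so their difference has vanishing divergence.

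Substituting $\bfq_h=\I_h^{RT}\bfp-\bfp_h$ into the first line of \eqref{mixed-diff} then kills the $u$-term and leaves the orthogonality $(\bfp-\bfp_h,\I_h^{RT}\bfp-\bfp_h)_\Omega=0$, from which the expansion $|\bfp-\bfp_h|_{0,\Omega}^2=(\bfp-\bfp_h,\bfp-\I_h^{RT}\bfp)_\Omega$ together with Cauchy--Schwarz gives $|\bfp-\bfp_h|_{0,\Omega}\le|\bfp-\I_h^{RT}\bfp|_{0,\Omega}$. One final application of Theorem~\ref{Thm-RT-int} and the regularity bound produces $|\bfp-\bfp_h|_{0,\Omega}\le C_2^{(2)}R|\bfp|_{1,\Omega}\le C_2^{(2)}C_{1,2}R|f|_{0,\Omega}$, and collecting constants in both estimates gives a single $C$ depending only on $C_2^{(1)}$, $C_2^{(2)}$, $C_{1,2}$, and $\beta_*$, independent of $h$, $R$, $f$, and the triangle shapes. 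The only genuinely delicate ingredient is the commuting identity $\divv\,\I_h^{RT}\bfp=\pi_\Omega^0\divv\,\bfp$; the rest is bookkeeping.
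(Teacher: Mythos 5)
Your proof is correct, and for the $\bfp$-estimate it takes a genuinely different (and sharper) route than the paper's one-line derivation. The paper obtains the corollary by ``gathering'' Theorems~\ref{PW-ineq1}, \ref{Thm-RT-int}, \ref{RT-infsup}, and \ref{RT-est}; for the $u$-estimate this works exactly as you describe, since the second inequality of Theorem~\ref{RT-est} involves only $L^2$-norms of $\bfp-\bfq_h$ and $u-w_h$, and your choices $\bfq_h=\I_h^{RT}\bfp$, $w_h=\pi_\Omega^0 u$ give the $C(R+h)|f|_{0,\Omega}$ bound. Your objection to using the first inequality of Theorem~\ref{RT-est} for the $\bfp$-estimate is well founded: with $\divv\,\bfp=-f\in L^2(\Omega)$ only, the divergence estimate of Theorem~\ref{Thm-RT-int} (which needs $\divv\,\bfq\in H^1$) is unavailable, and since $\divv\,\bfq_h$ is piecewise constant one has $\inf_{\bfq_h}\|\bfp-\bfq_h\|_{H(\divv,\Omega)}\ge|f-\pi_\Omega^0 f|_{0,\Omega}$, which is $O(1)|f|_{0,\Omega}$ in general — so that route yields only $C(R+1)|f|_{0,\Omega}$, not $CR|f|_{0,\Omega}$. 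Your replacement argument — $\divv\,\bfp_h=\pi_\Omega^0\divv\,\bfp=\divv\,\I_h^{RT}\bfp$ from the second line of \eqref{mixed-diff} and the commuting identity \eqref{RT-ineq1}, hence $\divv(\I_h^{RT}\bfp-\bfp_h)=0$, hence the orthogonality $(\bfp-\bfp_h,\I_h^{RT}\bfp-\bfp_h)_\Omega=0$ from the first line of \eqref{mixed-diff} and thus $|\bfp-\bfp_h|_{0,\Omega}\le|\bfp-\I_h^{RT}\bfp|_{0,\Omega}\le C_2^{(2)}C_{1,2}R|f|_{0,\Omega}$ — is the standard mixed-method duality-free trick and is exactly what is needed to recover the stated $O(R)$ rate for general $f\in L^2(\Omega)$. (It is worth noting that in the paper's only subsequent use of this corollary the data is $\pi_\Omega^0 f$, for which $f-\pi_\Omega^0 f$ vanishes and the naive route does succeed; but as the corollary is stated for arbitrary $f\in L^2(\Omega)$, your argument is the one that actually proves it.) The only cosmetic remark is that you could bound $|u|_{1,\Omega}$ by $C_{1,1}|f|_{0,\Omega}$ via \eqref{est-Hinv} instead of $C_{1,2}|f|_{0,\Omega}$, but either constant is admissible in the statement.
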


\subsection{Error analysis of the CR finite element method}
In this section, we estimate the error $\|u - u_h^{CR}\|_h$, where
$u \in H^2(\Omega)\cap H_0^1(\Omega)$ is the exact solution 
of \eqref{weak-form}, and $u_h^{CR} \in S_{h0}^{CR}$ is the
CR finite element solution defined by \eqref{CR-fem}.
We impose Assumption~\ref{reg-assump}.

We introduce the following auxiliary equations: for $f \in L^2(\Omega)$,
\begin{align*}
    - \Delta \bu = \pi_\Omega^0 f \quad \text{ in } \Omega, \qquad
    \bu = 0 \quad \text{ on }  \partial \Omega.
\end{align*}
The CR FEM for this equation is defined by \eqref{CR-aux}.
Note that $u - \bu$ satisfies
\begin{align*}
  a(u - \bu, v) = (f - \pi_\Omega^0f, v)_\Omega, \qquad
   \forall v \in H_0^1(\Omega).
\end{align*}
Therefore, from \eqref{est-Hinv} and the Poincar\'e--Wirtinger inequality
(Corollary~\ref{PW-ineq3}), we have
\begin{align}
  |u - \bu|_{1,\Omega} \le C_{1,1}
  \left\|f - \pi_\Omega^0 f\right\|_{-1,\Omega} \le C_{1,1}
  C_2^{(1)} h |f|_{0,\Omega}.
  \label{CR-est1}
\end{align}

Because $\pi_\Omega^0:L^2(\Omega) \to S_h^C$ is an orthogonal projection,
we have, for an arbitrary $v_h \in S_h^{CR}$,
\begin{align*}
  \left(f - \pi_\Omega^0 f, v_h\right)_\Omega =
  \left(f - \pi_\Omega^0 f, v_h - \pi_\Omega^0 v_h\right)_\Omega, \qquad
  \left|f - \pi_\Omega^0 f\right|_{0,\Omega} \le |f|_{0,\Omega}.
\end{align*}
Hence, Poincar\'e--Wirtinger's inequality (Theoerm~\ref{PW-ineq1}) yields
\begin{align*}
 \left|v_h - \pi_\Omega^0 v_h\right|_{0,\Omega}^2
 & = \sum_{K \in \T_h} \int_K \left|v_h - \pi_K^0 v_h\right|^2 \dd  \bfx \\
 & \le C_2^{(1)}{}^2\sum_{K \in \T_h} h_K^2 \int_K \left|\nabla v_h\right|^2 \dd \bfx
   \le C_2^{(1)}{}^2 h^2 \|v_h\|_h^2
\end{align*}
and 
$\left|v_h - \pi_\Omega^0 v_h\right|_{0,\Omega} \le C_2^{(1)} h \|v_h\|_h$.
Inserting $v_h := u_h^{CR} - \bu_h^{CR} \in S_{h0}^{CR}$ into 
\begin{align*}
  a_h\left(u_h^{CR} - \bu_h^{CR}, v_h\right)
  = \left(f - \pi_\Omega^0 f, v_h\right)_\Omega
  = \left(f - \pi_\Omega^0 f, v_h - \pi_\Omega^0 v_h\right)_\Omega,
\end{align*}
we obtain
\begin{align*}
 \left\|u_h^{CR} - \bu_h^{CR}\right\|_h^2 & \le
 \left|f - \pi_\Omega^0 f\right|_{0,\Omega}
 \left|u_h^{CR} - \bu_h^{CR} - \pi_\Omega^0(u_h^{CR} -
 \bu_h^{CR})\right|_{0,\Omega} \\
 & \le C_2^{(1)}h \left|f - \pi_\Omega^0 f\right|_{0,\Omega}
   \|u_h^{CR} - \bu_h^{CR}\|_h,
\end{align*}
and
\begin{align}
 \left\|u_h^{CR} - \bu_h^{CR}\right\|_h \le 
  C_2^{(1)}h \left|f - \pi_\Omega^0 f\right|_{0,\Omega}
  \le C_2^{(1)}h |f|_{0,\Omega}.
  \label{CR-est2}
\end{align}

As explained in Section~\ref{sec:relationship},
$\bu_h^{CR}$ is written as
\begin{align*}
 \nabla \bu_h^{CR} =  \bar\bfp_h + \frac{\pi_K^0 f}{2}(\bfx - \bfx_K)
\end{align*}
on each $K \in \T_h$, where $\bar\bfp_h \in \Ss_h^{RT}$ is the
RT finite element solution defined by \eqref{RT-aux}, and
$\bfx_K$ is the center of gravity of $K$.  Setting
$\bar\bfp := \nabla \bu$, we then find
{\allowdisplaybreaks
\begin{align*}
 \left\|\bu - \bu_h^{CR}\right\|_h^2 & =
  \sum_{K \in \T_h} \int_K \left|\bar\bfp - \bar\bfp_h +
 (\pi_K^0f) (\bfx - \bfx_K)/2 \right|^2 \dd \bfx \\
 & \le 2 |\bar\bfp - \bar\bfp_h|_{0,\Omega}^2
  + \frac{1}{2}\sum_{K \in \T_h} (\pi_K^0f)^2 \int_K
    |\bfx - \bfx_K|^2 \dd \bfx \\
 & = 2 |\bar\bfp - \bar\bfp_h|_{0,\Omega}^2
  + \frac{1}{2}\sum_{K \in \T_h} (\pi_K^0f)^2 \frac{|K|}{12}
    \sum_{i=1}^3 |\bfx_i - \bfx_K|^2 \\
 & \le 2 |\bar\bfp - \bar\bfp_h|_{0,\Omega}^2
  + \frac{h^2}{24}\sum_{K \in \T_h} (\pi_K^0f)^2|K| \\
 & \le 2 |\bar\bfp - \bar\bfp_h|_{0,\Omega}^2
  + \frac{h^2}{24} \int_\Omega |f|^2 \dd \bfx.
\end{align*}
}
In the above inequalities, we used the result
\begin{align*}
  \int_K |\bfx - \bfx_K|^2 \dd \bfx
   = \frac{|K|}{12}\sum_{i=1} |\bfx_i - \bfx_K|^2 \le
    \frac{|K|}{12} h_K^2.
\end{align*}

Setting $\bar\bfp := \nabla \bu$, it follows from
Corollary~\ref{RT-cor} that 
\begin{align*}
 |\bar\bfp - \bar\bfp_h|_{0,\Omega} & \le C R
   \left|\pi_\Omega^0 f\right|_{0,\Omega} \le
  C R \left|f\right|_{0,\Omega}.
\end{align*}
Therefore, we finally conclude 
\begin{align}
  \left\|\bu - \bu_h^{CR}\right\|_h \le C(R + h) |f|_{0,\Omega}.
  \label{CR-est3}
\end{align}
Gathering the estimations \eqref{CR-est1}, \eqref{CR-est2}, \eqref{CR-est3}
with the triangle inequality, we obtain the following theorem.

\begin{theorem} \label{CR-error-est}
Let $u \in H^2(\Omega)\cap H_0^1(\Omega)$ be the exact solution of
\eqref{model-eq}, and $u_h^{CR} \in S_{h0}^{CR}$ be the
CR finite element solution \eqref{CR-fem}.
Suppose that Assumption~\ref{reg-assump} holds and
$C_2^{(2)}R \le 1$. Then, we have the following error estimations:
\begin{align*}
 \|u - u_h^{CR}\|_h \le C(R + h) |f|_{0,\Omega},
\end{align*}
where constant $C$ depends on $C_2^{(i)}$, $i=1,2$, 
$C_{1,1}$, and $C_{1,2}$, but is independent of $h$, $R$, $f$, and
the geometric properties of $\T_h$.
\end{theorem}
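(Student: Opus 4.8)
The plan is to control the broken energy norm by inserting two auxiliary solutions and invoking the triangle inequality. First I would introduce the auxiliary exact solution $\bu \in H^2(\Omega)\cap H_0^1(\Omega)$ of the Poisson problem with right-hand side $\pi_\Omega^0 f$ (so that $-\Delta\bu = \pi_\Omega^0 f$ in $\Omega$ and $\bu = 0$ on $\partial\Omega$), together with its CR finite element approximation $\bu_h^{CR} \in S_{h0}^{CR}$ defined by \eqref{CR-aux}. Since the $\|\cdot\|_h$ seminorm reduces to the ordinary $H^1$ seminorm on $H^1(\Omega)$, the triangle inequality yields the three-term splitting
\begin{align*}
 \|u - u_h^{CR}\|_h \le |u - \bu|_{1,\Omega} + \|\bu - \bu_h^{CR}\|_h + \|\bu_h^{CR} - u_h^{CR}\|_h,
\end{align*}
and it then suffices to bound each term by $C(R+h)|f|_{0,\Omega}$.

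For the first term I would estimate $u - \bu$ entirely at the continuous level: it solves $a(u-\bu,v) = (f - \pi_\Omega^0 f, v)_\Omega$, so the a priori bound \eqref{est-Hinv} combined with the negative-norm Poincar\'e--Wirtinger estimate of Corollary~\ref{PW-ineq3} gives $|u-\bu|_{1,\Omega}\le C_{1,1}C_2^{(1)}h|f|_{0,\Omega}$, which is \eqref{CR-est1}. For the third term I would subtract the two CR equations \eqref{CR-fem} and \eqref{CR-aux} and test the difference with $v_h := u_h^{CR}-\bu_h^{CR}$; the orthogonality of $\pi_\Omega^0$ lets me replace $v_h$ by $v_h - \pi_\Omega^0 v_h$ on the right-hand side, after which Theorem~\ref{PW-ineq1} controls the latter by $C_2^{(1)}h\|v_h\|_h$, delivering \eqref{CR-est2}.

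The middle term is where the real work lies, and I expect it to be the main obstacle. The idea is to use the CR--RT correspondence \eqref{CR-RT}: on each $K$ one has $\nabla\bu_h^{CR} = \bar\bfp_h + \tfrac{\pi_K^0 f}{2}(\bfx-\bfx_K)$, where $\bar\bfp_h\in\Ss_h^{RT}$ is the RT solution \eqref{RT-aux} for the source $\pi_\Omega^0 f$. Writing $\bar\bfp := \nabla\bu$ and peeling off the quadrature remainder $\tfrac{\pi_K^0 f}{2}(\bfx-\bfx_K)$, whose $L^2$ norm is controlled elementwise by the identity $\int_K|\bfx-\bfx_K|^2\dd\bfx = \tfrac{|K|}{12}\sum_i|\bfx_i-\bfx_K|^2\le\tfrac{|K|}{12}h_K^2$, reduces the estimate to bounding the flux error $|\bar\bfp - \bar\bfp_h|_{0,\Omega}$. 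This is exactly the RT error, so Corollary~\ref{RT-cor} applies and gives $|\bar\bfp-\bar\bfp_h|_{0,\Omega}\le CR|f|_{0,\Omega}$, producing \eqref{CR-est3}. The delicate point throughout is that Corollary~\ref{RT-cor} rests on the discrete inf-sup condition of Theorem~\ref{RT-infsup}, which itself requires both Assumption~\ref{reg-assump} and the smallness hypothesis $C_2^{(2)}R\le 1$; these are precisely the structural conditions that keep every constant independent of the shape of $\T_h$. Assembling \eqref{CR-est1}, \eqref{CR-est2}, and \eqref{CR-est3} in the splitting above then completes the proof.
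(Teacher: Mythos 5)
Your proposal is correct and follows essentially the same route as the paper: the same auxiliary problem with source $\pi_\Omega^0 f$, the same three-term triangle-inequality splitting, and the same three estimates \eqref{CR-est1}, \eqref{CR-est2}, \eqref{CR-est3} obtained from Corollary~\ref{PW-ineq3}, the orthogonality of $\pi_\Omega^0$ with Theorem~\ref{PW-ineq1}, and the CR--RT correspondence \eqref{CR-RT} combined with Corollary~\ref{RT-cor}, respectively. No gaps.
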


\section{Numerical experiments}
In this section, we present the results of numerical experiments that
confirm the obtained error estimations.
Let $\Omega := (0,1) \times (0,1)$.
We compute the $P_1$ Lagrange and the CR finite element solutions,
$u_h^L$ and $u_h^{CR}$, respectively, for the model problem
\begin{align*}
 -\Delta u &= 2x(1-x) + 2y(1-y) \quad \text{ in } \quad \Omega, \qquad
  u = 0 \quad  \text{ on } \quad \partial\Omega,
\end{align*}
which has the exact solution $u = x(1-x)y(1-y)$.

To this end, we triangulate $\Omega$ with triangles of height $1/N$ and
baseline length $1/M$ (see Figure~1), with a positive
integer $M$ and a positive and even integer $N$.
The triangulation has $(2M+1)N$ elements, and
the numbers of freedom are $MN + M + 3N/2 + 1$ and
$3MN + M + 5N/2$ for the $P_1$ Lagrange and the CR elements,
respectively.  In Figure~2, we give the finite element solutions obtained.
\begin{figure}[thbp]
 \centering
 \includegraphics[width=11cm]{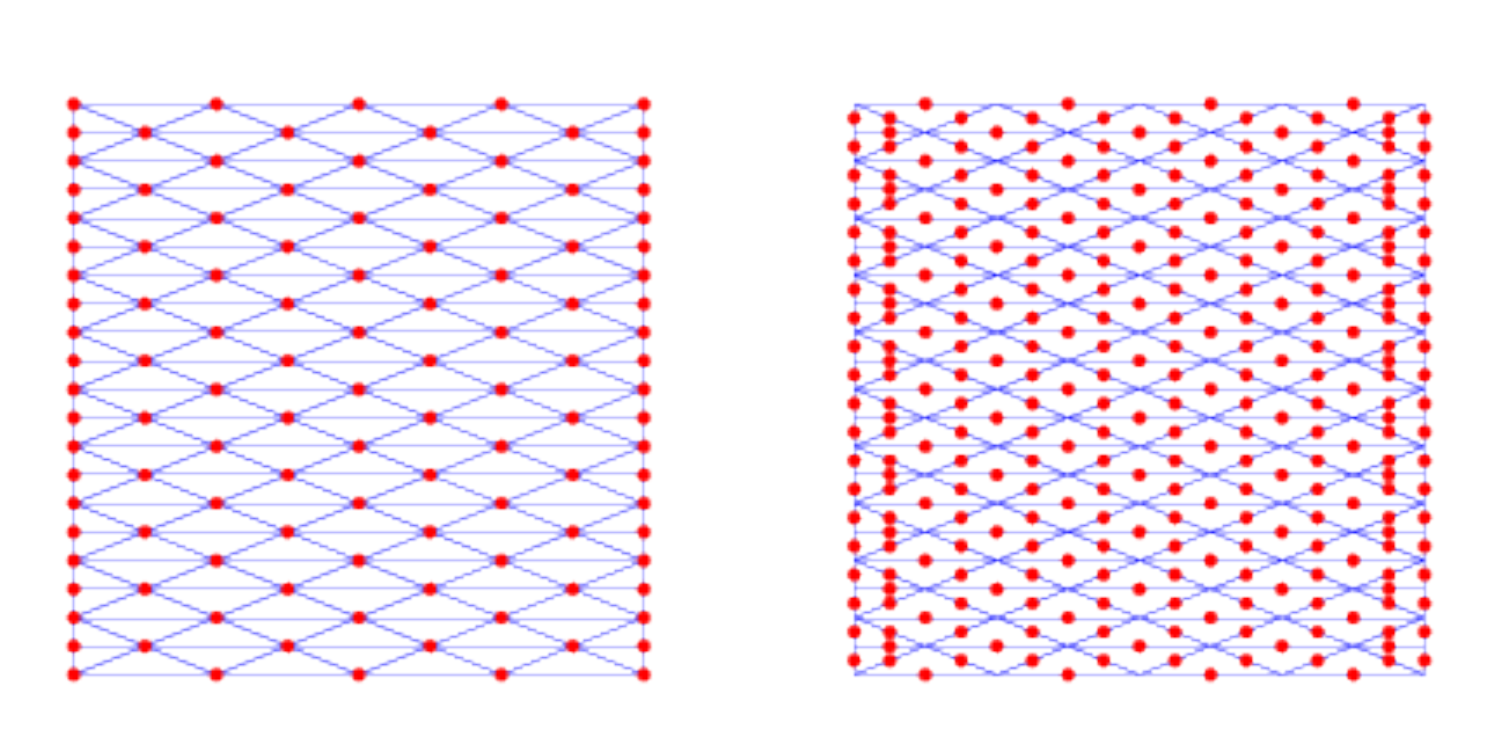}
\caption{Triangulation used for $\Omega$ with $M = 4$ and $N = 20$.
The dots on the left indicate the degrees of freedom of the $P_1$
Lagrange elements, and those on the right indicate the degrees of
freedom of the CR elements.}
\label{fig:mesh}
\end{figure}

\begin{figure}[thbp]
 \centering
 \includegraphics[width=10.3cm]{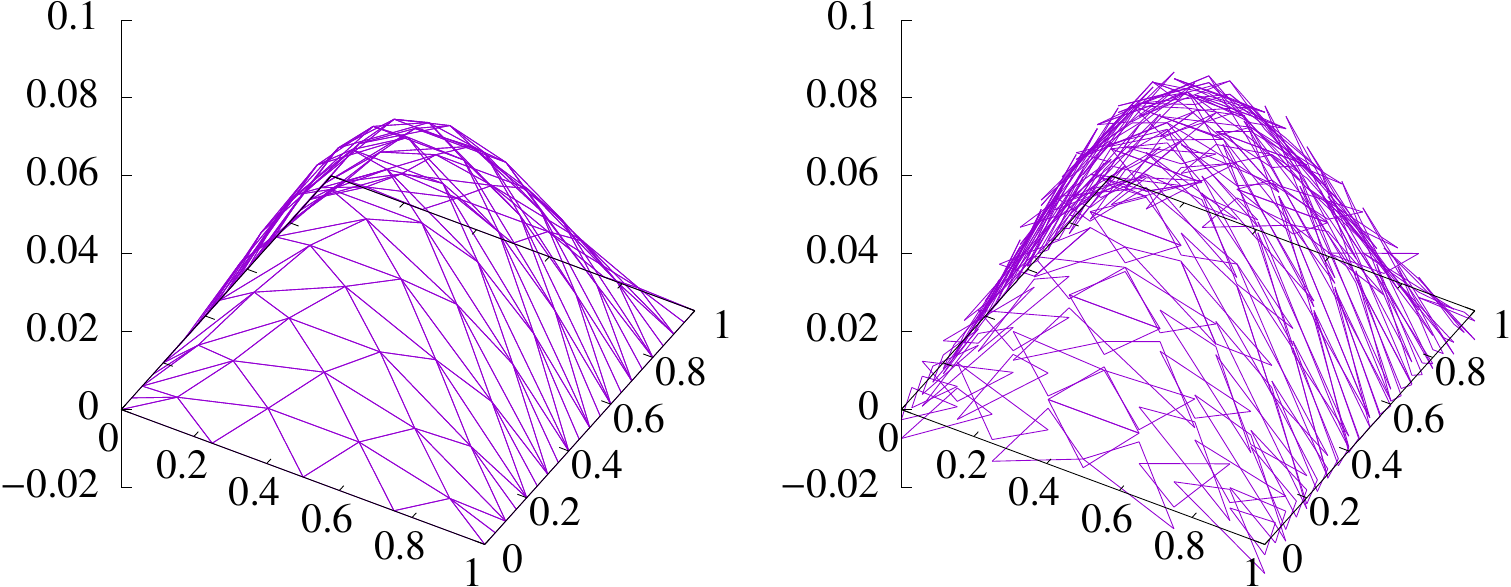}
\caption{Numerical solutions: $P_1$ Lagrange elements (left),
 CR elements (right).  Note that the
values of the finite element solutions are somewhat exaggerated.}
\label{fig:result}
\end{figure}

Setting $N$ to be the closest even integer to $M^\alpha$ with
$\alpha = 1.5$, we compute the error of $|u - u_h^L|_{1,\Omega}$
and $\|u - u_h^{CR}\|_h$ for various $M$.
The results are given in Tables~\ref{p1radius} and \ref{crradius}.
Note that because $C_2^{(2)} \approx 2.7883$ as stated before,
the condition $C_2^{(2)}R \le 1$ is satisfied if
$R \lessapprox 0.3586$.
We clearly see that the behavior of the errors is consistent
with \cite[Theorem~3]{KobayashiTsuchiya1} and 
\cite[Theorem~1.1]{KobayashiTsuchiya3} for $P_1$ Lagrange elements,
and Theorem~\ref{CR-error-est} for the CR finite elements.
In particular, we emphasize that 
$|u - u_h^{L}|_{1,\Omega}/R$ and $\|u - u_h^{CR}\|_h/R$ look stable,
whereas $|u - u_h^{L}|_{1,\Omega}/h$ and $\|u - u_h^{CR}\|_h/h$
seem to diverge.  This means that the error estimations in terms
of the circumradius $R$ are \textit{essential} and
\textit{the best possible}.

\begin{table}[htbp]
\centering
 \caption{Error $|u - u_h^L|_{1,2}$ with various $M$
 and $N \approx M^{1.5}$.}
\begin{tabular}{ccccccc}
\hline
 $M$ & $N$ & $h$ & $R$ & $|u - u_h^L|_{1,\Omega}$
 & $|u - u_h^L|_{1,\Omega}/h$
 & $|u - u_h^L|_{1,\Omega}/R$ \\
\hline
10  & 32   & 0.1000 & 0.1756 & 0.0167277 & 0.1672776 & 0.0952470 \\
20  & 90   & 0.0500 & 0.1180 & 0.0108223 & 0.2164462 & 0.0916713 \\
30  & 164  & 0.0333 & 0.0941 & 0.0085646 & 0.2569403 & 0.0909588 \\
40  & 252  & 0.0250 & 0.0807 & 0.0073229 & 0.2929178 & 0.0907044 \\
50  & 354  & 0.0200 & 0.0722 & 0.0065410 & 0.3270520 & 0.0905805 \\
60  & 464  & 0.0166 & 0.0655 & 0.0059329 & 0.3559770 & 0.0905489 \\
70  & 586  & 0.0142 & 0.0606 & 0.0054905 & 0.3843359 & 0.0905290 \\
80  & 716  & 0.0125 & 0.0566 & 0.0051271 & 0.4101745 & 0.0905289 \\
90  & 854  & 0.0111 & 0.0533 & 0.0048257 & 0.4343167 & 0.0905366 \\
100 & 1000 & 0.0100 & 0.0505 & 0.0045726 & 0.4572635 & 0.0905472 \\
\hline
\end{tabular}
\label{p1radius}
\end{table}

\begin{table}[htbp]
\centering
 \caption{Error $\|u - u_h^{CR}\|_{h}$ with various $M$
 and $N \approx M^{1.5}$.}
\begin{tabular}{ccccccc}
\hline
 $M$ & $N$ & $h$ & $R$ & $\|u - u_h^{CR}\|_h$
&  $\|u - u_h^{CR}\|_h/h$ & $\|u - u_h^{CR}\|_h/R$ \\
\hline
10  & 32   & 0.1000 & 0.1756 & 0.0167791 & 0.1677918 & 0.0955398 \\
20  & 90   & 0.0500 & 0.1180 & 0.0104671 & 0.2093425 & 0.0886627 \\
30  & 164  & 0.0333 & 0.0941 & 0.0081263 & 0.2440346 & 0.0863037 \\
40  & 252  & 0.0250 & 0.0807 & 0.0068669 & 0.2746769 & 0.0850560 \\
50  & 354  & 0.0200 & 0.0722 & 0.0060827 & 0.3041381 & 0.0842343 \\
60  & 464  & 0.0166 & 0.0655 & 0.0054908 & 0.3294498 & 0.0838012 \\
70  & 586  & 0.0142 & 0.0606 & 0.0050614 & 0.3543016 & 0.0834546 \\
80  & 716  & 0.0125 & 0.0566 & 0.0047136 & 0.3770886 & 0.0832266 \\
90  & 854  & 0.0111 & 0.0533 & 0.0044273 & 0.3984651 & 0.0830631 \\
100 & 1000 & 0.0100 & 0.0505 & 0.0041883 & 0.4188380 & 0.0829382 \\
\hline
\end{tabular}
\label{crradius}
\end{table}

\vspace{0.3cm}
\textsc{Acknowledgments.}
The authors were supported by JSPS KAKENHI Grant Numbers
JP26400201, JP16H03950, and JP17K18738.
The authors thank the anonymous referee for the
valuable comments that helped to improve this paper.

\end{document}